\newtheorem{thm}{Theorem}[section]
\newtheorem{lmm}[thm]{Lemma}
\newtheorem{que}[thm]{Question}
\newtheorem*{thm_cnj}{Recursive Formula}
\newtheorem*{mresult_cnj}{Main result (modulo a conjecture)}
\def \lra{\longrightarrow}
\def\bge{\begin{equation}}
\def\ede{\end{equation}}                
\def\bgd{\begin{displaymath}}         
\def\edd{\end{displaymath}}            
\def\bgee{\begin{equation*}}           
\def\edee{\end{equation*}}
\begin{document}

\title[Enumeration of rational cuspidal curves via the WDVV equation]{Enumeration of rational cuspidal curves via the WDVV equation}

\author[I. Biswas]{Indranil Biswas}

\address{Department of Mathematics, Shiv Nadar University, NH91, Tehsil
Dadri, Greater Noida, Uttar Pradesh 201314, India}

\email{indranil.biswas@snu.edu.in, indranil29@gmail.com}

\author[A. Choudhury]{Apratim Choudhury}

\address{Institut f{\"u}r Mathematik, Humboldt-Universit{\"a}t zu Berlin,
Unter den Linden 6, Berlin- 10099, Germany}

\email{apratim.choudhury@hu-berlin.de}

\author[R. Mukherjee]{Ritwik Mukherjee}
\address{School of Mathematical Sciences, National Institute of Science Education and Research, Bhubaneswar, 
An OCC of Homi Bhabha National Institute, Khurda 752050, Odisha, India.}
\email{ritwikm@niser.ac.in}

\author[A. Paul]{Anantadulal Paul}
\address{International Center for Theoretical Sciences, Survey No. 151, Hesaraghatta, Sivakote, Bangalore, 560089, India.}
\email{anantadulal.paul@icts.res.in}

\subjclass[2020]{14N35, 14J45}

\date{}

\begin{abstract}
We give a conjectural 
formula for the characteristic number of rational cuspidal curves in $\mathbb{P}^2$ 
by extending the idea of Kontsevich's recursion formula (namely pulling back the 
equality of two divisors in $\overline{M}_{0,4}$).
The key geometric input that is needed here is that 
in the closure of rational 
cuspidal curves, there are two component rational curves which are tangent to each other at 
the nodal point. While this fact is geometrically quite believable, we haven't as yet 
proved it; hence our formula is for the moment conjectural.
The answers that we obtain 
agree with what has been computed earlier 
{by} Ran, 
Pandharipande, Zinger and Ernstr\"{o}m and Kennedy. 
 
We extend this technique (modulo another conjecture) to obtain the characteristic 
number of rational quartics with an $E_6$ singularity.
\end{abstract}

\maketitle
\tableofcontents

\section{Introduction}

The enumerative geometry of rational curves on projective spaces is a rich subject with a long history. It 
has been studied by mathematicians for more than a hundred years. Using the concept of moduli space of stable 
maps and Gromov-Witten Invariants, Kontsevich solved the following question: 
\begin{que}
\label{qu}
How many rational degree $d$ curves are there in $\mathbb{P}^2$ that pass through $3d-1$ generic points? 
\end{que}

Since Kontsevich's elegant recursive formula for the above question, a lot of development has occurred that has 
made an in depth study of moduli space of curves and enumerative geometry. 
Although Gromov-Witten invariants have become an invaluable tool in modern enumerative geometry, actually
only a few tools are available in the literature to explicitly
compute these invariants. Except for favourable cases, computing higher genus Gromov-Witten invariants in general is an extremely
challenging problem in this subject, and it still needs to be solved.

{A collection of} degeneration techniques have appeared to be successful in studying Gromov-Witten invariants. Degenerations suggest that we
must expand the set of curves under consideration while enumerating rational curves through general linear spaces
employing degeneration methods.
Let $H$ be the hyperplane in $\mathbb{P}^n$. The number of
degree $d$ rational curves incident to 
{a collection of} general linear spaces and tangent to $H$,
with various multiplicities along 
{a collection of} general linear subspaces of $H$, are known as
enumerative invariants of $\mathbb{P}^n$. Then, the way degeneration methods apply to study enumerative problems is as follows: one specializes a general linear space to lie entirely in $H$, and hence, one needs to compute the resulting degenerations and multiplicities. Thus, even though one is only interested in ordinary enumerative invariants, one is forced to study all enumerative invariants. This is why questions involving tangencies are fundamental in this subject. For projective space, enumerative invariants are commonly known as relative Gromov-Witten invariants.
Hence, it is natural that problems involving high-order
contact between rational curves appear as sub-problems while understanding enumerative invariants for projective space. The
famous results, amongst others due to Caporaso-Harris \cite{CH} and Gathmann \cite{Gath1} are successful evidences of the technique.

In the spirit of \cite{RVakil_deformation}, one can construct a smooth curve on projective space whose deformation
space may have any given number of components, with a possibly arbitrary number of singularities of given
type, with arbitrarily nonreduced behaviour along various subsets, as the deformation space may be as bad as possible. Due to this, the enumeration of curves with various types of singularities plays an essential part in the modern development of this field. Therefore, the questions invoking rational curves with singularities turn out to be highly challenging, and only a few results are available in the literature. In this context, one can ask the following ambitious problem:

\begin{que}\label{rational_curves_with_singularity}
How many degree $d$ rational curves are there in $\mathbb{CP}^2$ that have a singularity of a certain type, and that pass through the
correct number of generic points?
\end{que}

For some cases, the above question has been extensively studied; however, to the best of our knowledge, there are no general results available for arbitrary type of singularity. When the singularity is an $m$ fold singular point, very recently, in \cite{IRCAA_adv}, we obtained a general recursive formula to enumerate plane rational curves with an $m$ fold singular point satisfying appropriate point constraint, which was only known for $m \leq 3$ earlier.

In this article, we want to understand the question \ref{rational_curves_with_singularity} when the singularity is a cusp. This has been studied among others by Vakil (\cite{Va_Sh}),
Ran (\cite{Ran3}), 
Pandharipande (\cite{Rahul1}), Zinger (\cite{g2p2and3}) and Ernstr\"{o}m and Kennedy (\cite{ken}). 
The question of counting curves with cusps has also been studied more recently 
from the point of view of tropical geometry. 
The paper by Ganor and Shustin (\cite{GS}), studies the question of enumerating genus $g$ curves with certain number of 
nodes and one cusp, using methods from tropical geometry.

Kontsevich's solution to computing the number of rational curves is very elegant. The idea is that we look at 
$\overline{M}_{0,4}$, 
the moduli space of four marked points on a sphere. This space space is isomorphic to $\mathbb{P}^1$. We then realize 
that any two points determine the same divisor. We pull the divisor back on $\overline{M}_{0,4}(\mathbb{P}^2, d)$, 
intersect with appropriate cycles of complementary dimension, which gives us equality of numbers, which in turn gives us 
the famous Kontsevich's recursion formula. The equivalence of the two divisors in $\overline{M}_{0,4}$ is also 
sometimes referred to as the WDVV equation.

The goal of this manuscript is to show that by extending Kontsevich's idea (combined with an extra geometric fact), we 
can enumerate rational cuspidal curves in $\mathbb{P}^2$. It is likely that this idea can be used to enumerate curves with 
higher singularities. The explicit formula is given in section \ref{exp_formula}. In section \ref{ldc} 
we do a few consistency checks and observe that our numbers are consistent with those computed earlier
(including a few numbers that are not directly given in \cite{g2p2and3}, but can be easily obtained by modifying the 
method of Zinger). 

The following is the main result of this paper, modulo a conjecture.

\begin{mresult_cnj}
Let $d,\, m$ be positive integers and $n$ a nonnegative integer. Let $\mathsf{C}^d_m(n)$\footnote{
{Note that when $n=0, 1, 2$ the number $\mathsf{C}^d_m(n)$ gives us the number of rational degree $d$ curves in $\mathbb{P}^2$ with a free cusp (cuspidal point is somewhere in the plane) passing through $3d-2$ points, cusp lying on a fixed line and passing through $3d-3$ points, the cusp being positioned at a fixed point and passing through $3d-4$ points in general position, respectively}.} denote the number of rational curves (in $\mathbb{P}^2$) 
with a cusp lying on the intersection of $n$ generic lines and passing through $m$
generic points, where $n+m\, =\, 3d-2$. Then, using WDVV technique, there is a recursive formula to enumerate all the numbers $\mathsf{C}^d_m(n)$
({see Equation \ref{cusp_main_formula_wdvv}}).
\end{mresult_cnj}
{Observe that there could not be any rational lines and conics with a cusp. Hence, the numbers $\mathsf{C}^d_m(n)$ are nonzero for $d \geq 3$. When $n\geq 3$, we are asking the cuspidal curves to lie on the intersection of three or more lines in $\mathbb{P}^2$, which leads to an empty intersection. Consequently, $\mathsf{C}^d_m(n) = 0$ for $n\geq 3$. These cases will play the role of the base cases for our recursion}.

Denote by $\mathsf{T}_{m_1, m_2}^{d_1, d_2}(n)$ the number of two component rational curves of degree $d_1$ and $d_2$ such that the $d_1$
component passes through $m_1$ 
points, the $d_2$ component passes through $m_2$ points, they are tangent to each other at the nodal point, and the nodal point 
passes through the intersection of $n$ generic lines, where $m_1+m_2+n+1 = 3(d_1+d_2)-2$. The locus of such elements comprises of elements of the following type:
\begin{figure}[hbt!]
\label{pic idea1}
\begin{center}\includegraphics[scale = 0.6]{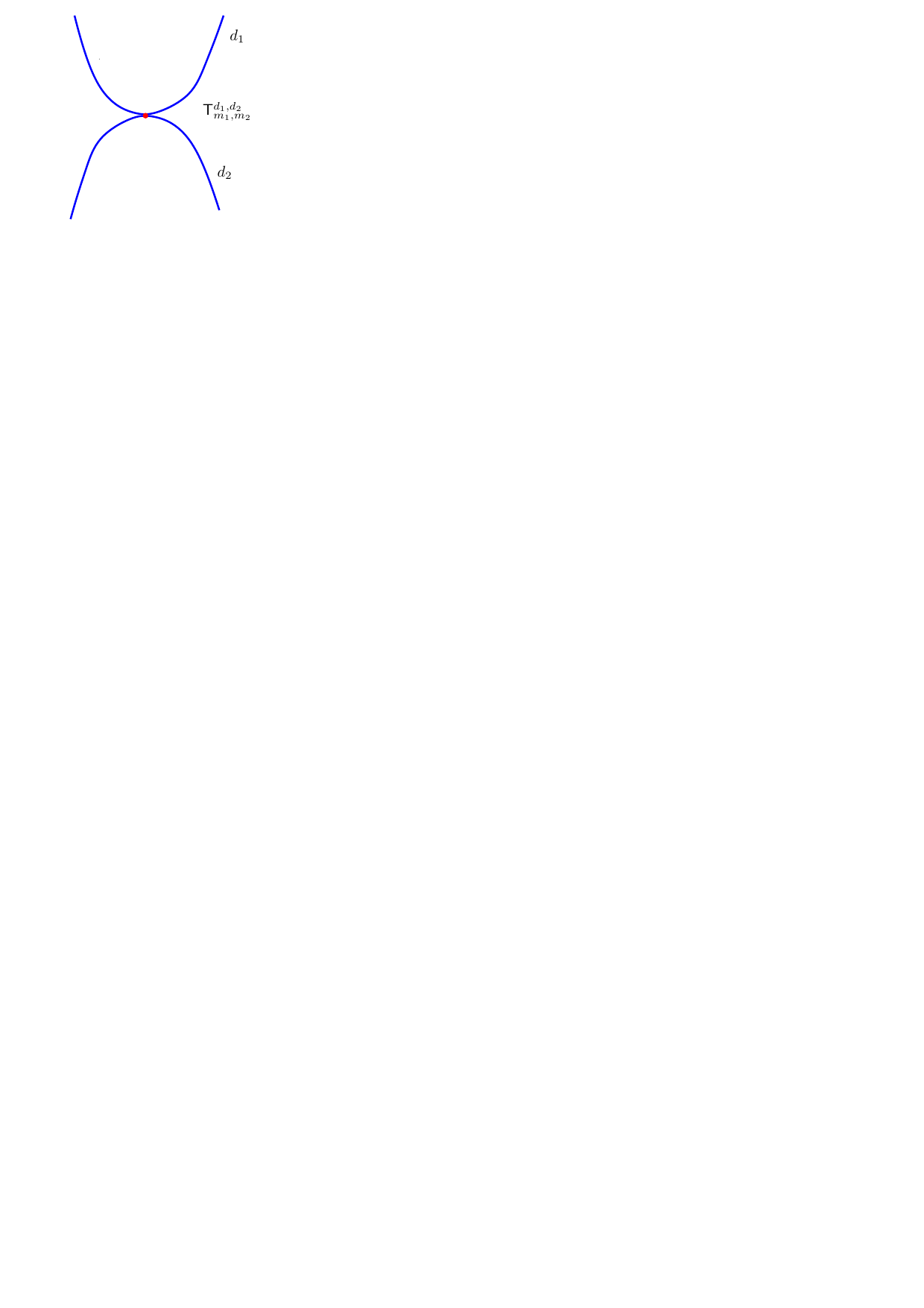}
\end{center}
\end{figure}
\FloatBarrier
These numbers complete our recursion formula for $\mathsf{C}^d_m(n)$. On the other
hand, the formula for $\mathsf{T}_{m_1, m_2}^{d_1, d_2}(n)$ has already been established in \cite{Gath1}, and ours agree with them.

There has been a great deal of earlier work on this problem. In \cite{Rahul1}, Pandharipande investigated the geometry of divisors on $\overline{M}_{0,n}(\mathbb{P}^r, d)$,
and as an application of this method,
he calculated classical tangency characteristic numbers of
rational curves in projective space as well as enumerated $1-$ cuspidal 
rational plane curves passing through $3d - 2$ general points in $\mathbb{P}^2$. A rational plane curve $C$ is $1-$cuspidal if the singularities of $C$ consist of
nodes and exactly $1$ cusp. The method employed is the following:

Let $M_{0,0}(\mathbb{P}^2, d)$ be $\overline{M}_{0,0}(\mathbb{P}^2, d)$ minus the boundary. Let $Z \,\subset\, M_{0,0}(\mathbb{P}^2, d)$ be the subvariety of maps that are not immersions. Then it can be easily seen
that $Z$ is of pure codimension $1$ and the generic element of every component corresponds to a $1$-cuspidal rational plane curve provided
$d \,\geq\, 3$. Let $\mathcal{Z}$ be the Weil divisor obtained by taking
the closure of $Z$ in $\overline{M}_{0,0}(\mathbb{P}^2, d)$. Then in \cite{Rahul1}, it has been shown that 
{the number $C_d$ of irreducible $1$-cuspidal rational plane curves passing through $3d-2$ general points is given by $$C_d\ =\ \mathcal{Z} \cdot \mathcal{H}^{3d-2}$$
on $\overline{M}_{0,0}(\mathbb{P}^2, d)$, where the class $\mathcal{H}$ is determined by passing through point condition}.

In \cite{ken}, Ernstr\"{o}m and Kennedy considered the moduli space of \textit{stable lifts} $\overline{M}^1_{0,n}(\mathbb{P}^2, d)$, parametrizing
those stable maps to the incidence correspondence of
points and lines in $\mathbb{P}^2$, which can only be lifted from maps to $\mathbb{P}^2$, and their degenerations, and they described all possible boundary divisors of this space. As a consequence, from the linear equivalence of special boundary divisors in this space, they obtain several recursive formulas to enumerate the characteristic numbers of rational plane curves of degree $d$, with one cusp with many insertion conditions.

Our method is straightforward. 
{Let the space $\mathcal{S}_{0,4}$ be a subset of 
$M_{0,4}(\mathbb{P}^2, d)$ consisting of rational curves (with a smooth domain) such that it has a cusp at the first marked point. An element of this
space can be visualized as follows:
\begin{figure}[hbt!]
\label{pic idea2}
\begin{center}\includegraphics[scale = 0.75]{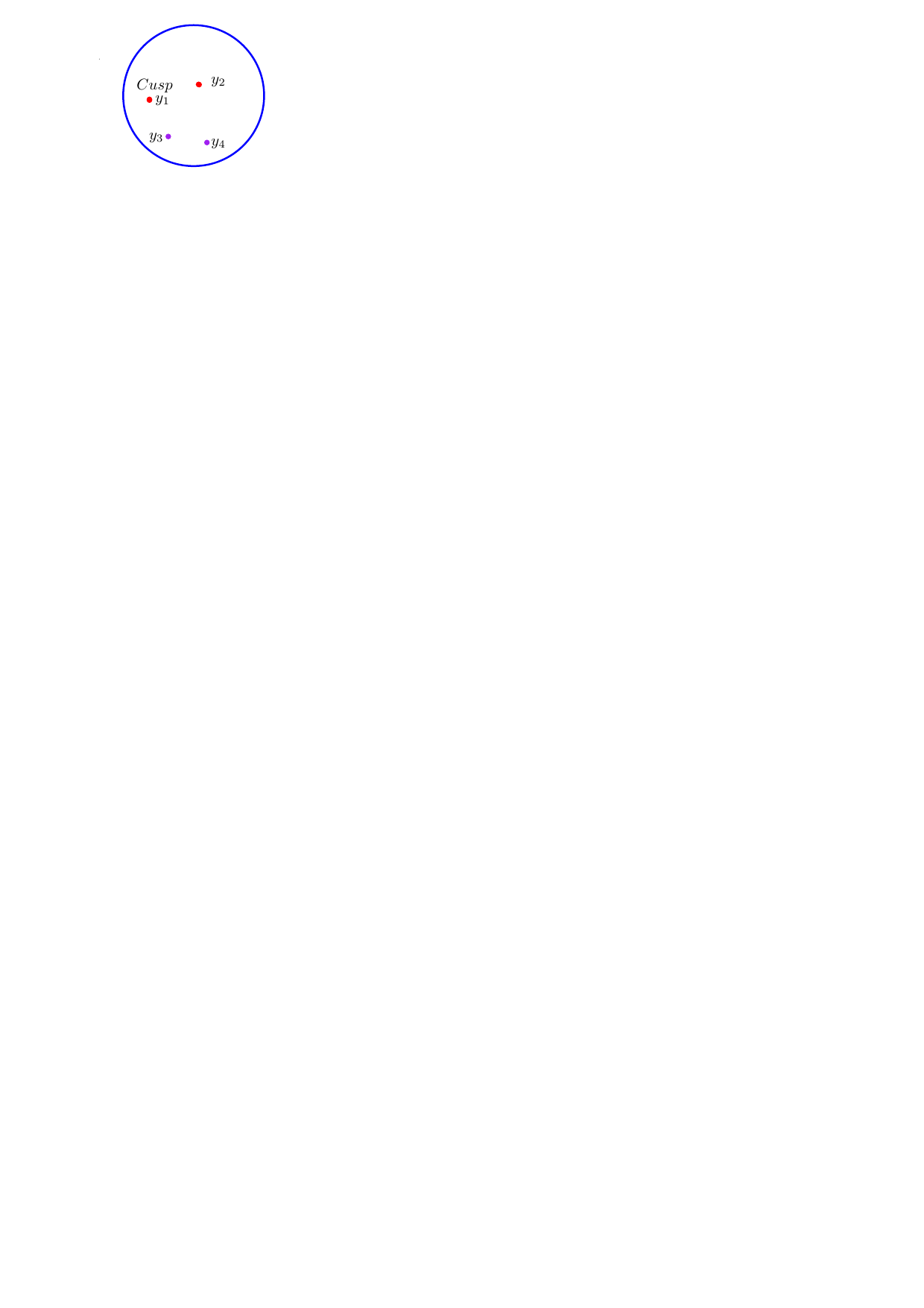}
\end{center}
\end{figure}
\FloatBarrier
Denote by $\overline{\mathcal{S}}_{0,4}(\mathbb{P}^2, d)$ the closure of $\mathcal{S}_{0,4}(\mathbb{P}^2, d)$ inside $\overline{M}_{0,4}(\mathbb{P}^2, d)$. The objects in the closure of $\mathcal{S}_{0,4}(\mathbb{P}^2, d)$ can be visualized by the following picture:
\begin{figure}[hbt!]
\label{pic idea3}
\begin{center}\includegraphics[scale = 0.7]{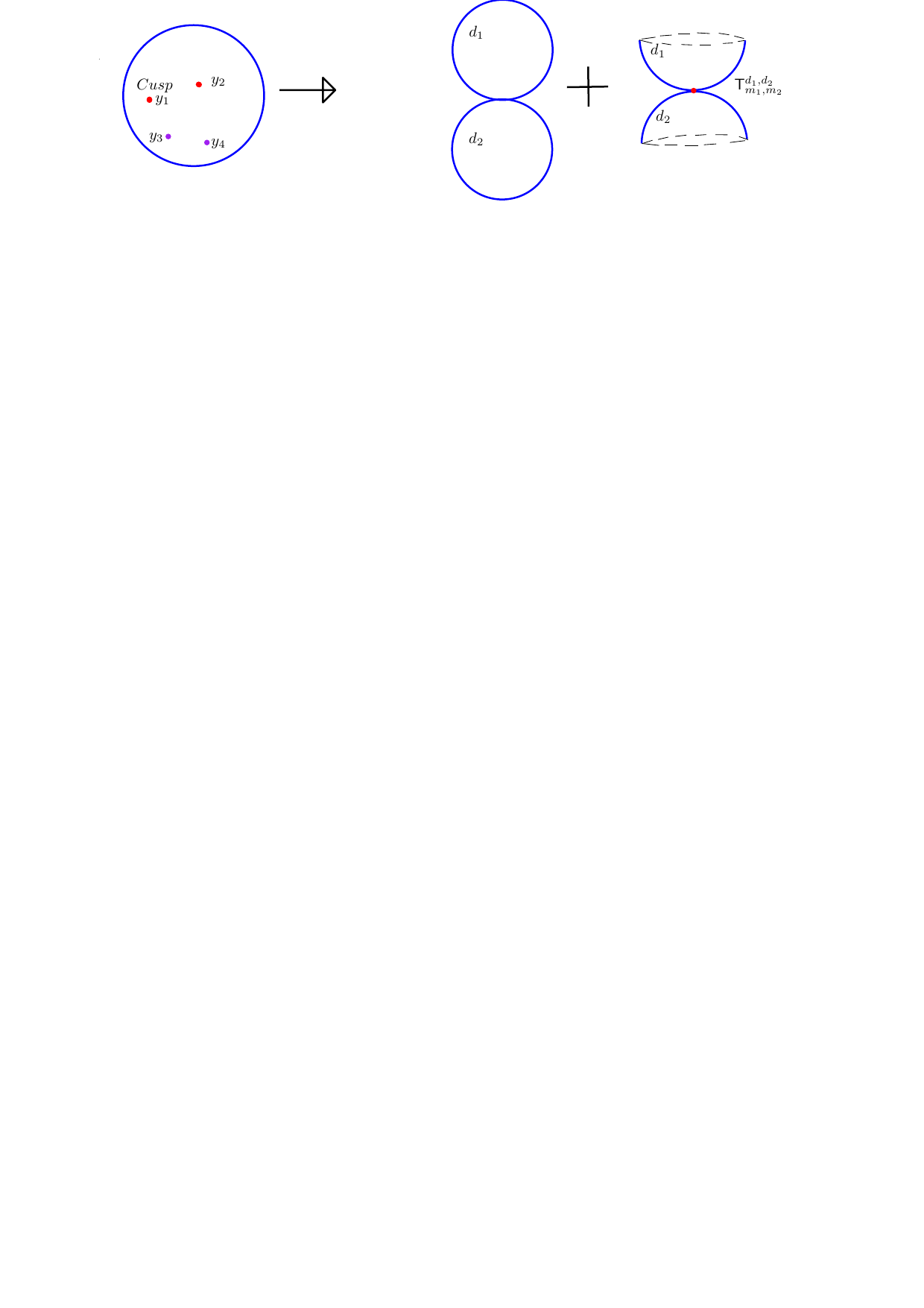}
\end{center}
\end{figure}
\FloatBarrier
Notice that in $\mathsf{T}_{m_1, m_2}^{d_1, d_2}(n)$ component the marking $y_1$ goes to the nodal point}.
We now simply pullback the WDVV equation on $\overline{M}_{0,4}(\mathbb{P}^2, d)$ via the map
$$ \pi\ :\ \overline{M}_{0,4}(\mathbb{P}^2, d)\ \lra\ \overline{M}_{0,4},$$
and intersect it with the cycle $[\overline{\mathcal{S}}_{0,4}(\mathbb{P}^2, d)]$ inside $\overline{M}_{0,4}(\mathbb{P}^2, d)$ having chosen a suitable class of
complementary dimension (see Section \ref{derivation_cupidal_curves}). Therefore, by computing the above intersection together with the geometric fact that in the closure of the space of rational curves with $1$ cusp, there are bubble maps of degree $d_1$ and $d_2$ such that they are tangent to each other at the nodal point, and hence enumerating $\mathsf{T}_{m_1, m_2}^{d_1, d_2}(n)$, we get our recursive formula for $\mathsf{C}^d_m(n)$ for all $d, m,$ and $n$. 

\section{Recursive formula for cuspidal curves}\label{exp_formula}

In this section we present the explicit formula to compute the characteristic number of rational cuspidal curves in 
$\mathbb{P}^2$ that we have obtained via the WDVV equation. Let $\mathsf{N}^{d}_{m}$ 
denote the number of rational degree $d$ curves in $\mathbb{P}^2$ through $m$ points, where $m=3d-1$. 
If $m\,\neq\, 3d-1$, then formally define this number to be zero.
Next, let $\mathsf{C}^d_m(n)$ denote the number of rational curves (in $\mathbb{P}^2$) 
with a cusp lying in the intersection of $n$ generic lines and passing through $m$
generic points, where $n+m = 3d-2$. Again, if $n+m\neq 3d-2$, then formally define this 
number to be zero. Finally, define 
\begin{align*}
\mathsf{T}^{d_1, d_2}_{m_1, m_2}(n) 
\end{align*}
to be the number of two component rational curves of degree $d_1$ and $d_2$ such that the $d_1$ component passes through $m_1$ 
points, the $d_2$ component passes through $m_2$ points, they are tangent to each other at the nodal point and the nodal point 
passes through the intersection of $n$ generic lines, if $m_1+m_2+n+1 \,=\, 3(d_1+d_2)-2$. 
As before, if $m_1+m_2+n+1 \neq 3(d_1+d_2)-2$, then this number is zero. 
We obtain the following recursive formula to enumerate rational cuspidal curves 
in $\mathbb{P}^2$: 

\begin{thm_cnj}
\label{mt_cusp}
Let $\mathsf{N}^{d}_{m}$,\, $\mathsf{C}^d_m(n)$ and $\mathsf{T}^{d_1, d_2}_{m_1, m_2}(n)$ 
be defined as above. Then, 
\begin{align}
\mathsf{C}^d_m(n)& \,=\,0 \qquad \textnormal{if} ~~n \,\geq\, 3 \qquad \forall ~m,\, d \qquad \textnormal{and} \label{base_rel1}\\
\mathsf{C}^1_{m}(n)&\, =\, 0 \qquad \textnormal{and} \qquad \mathsf{C}^2_m(n)\,=\, 0 \qquad \forall ~m,\,n. \label{base_rel2}
\end{align}
Furthermore, when $d\,\geq\, 3$, the following identity holds
$$
\mathsf{C}^d_{3d-2-n}(n)\ =\ d^2\mathsf{C}^d_{3d-3-n}(n+1)-d^2\mathsf{C}^d_{3d-4-n}(n+2)
$$
\begin{equation}\label{cusp_main_formula_wdvv}
+ \sum_{\substack{m_1+m_2=3d-4-n, \\ d_1+d_2=d}}\binom{3d-4-n}{m_1}d_2^2\Big(\mathsf{C}^{d_1}_{m_1}(n)\mathsf{N}^{d_2}_{m_2}d_1^2 d_2
-\mathsf{C}^{d_1}_{m_1+1}(n)\mathsf{N}^{d_2}_{m_2}d_1 d_2+\mathsf{T}^{d_1, d_2}_{m_1, m_2}(n)d_1 
-\mathsf{T}^{d_1, d_2}_{m_1+1, m_2}(n)\Big).
\end{equation}
\end{thm_cnj}

Note that $\mathsf{N}^{d}_{m}$ can be computed via Kontsevich's recursion formula. In Section \ref{T_num} 
a formula is given to compute $\mathsf{T}^{d_1, d_2}_{m_1, m_2}(n)$. Equations \eqref{base_rel1} and \eqref{base_rel2} 
are base cases of the recursion. Using all this information, the main formula \eqref{cusp_main_formula_wdvv} 
yields the characteristic number of rational cuspidal curves in $\mathbb{P}^2$. 

We have written a mathematica program to implement 
the formula given by equation \eqref{cusp_main_formula_wdvv}.
{The implementation of the code can be obtained from the repository}
\[ \textnormal{\url{https://github.com/AnantaSpace/RATIONAL-CUSPIDAL-CURVES-VIA-WDVV}}  \]

\section{Kontsevich's recursion formula} 

For the convenience of the reader, it is recalled how to compute $\mathsf{N}^{d}_{m}$. 
Let $M_{0,n}(\mathbb{P}^2, d)$ denote the moduli space of rational degree $d$ 
curves into $\mathbb{P}^2$ and $\overline{M}_{0,n}(\mathbb{P}^2, d)$ its stable map compactification. This space admits 
$n$ distinct evaluation maps into $\mathbb{P}^2$; the $i^{\textnormal{th}}$ evaluation map 
will be denoted by $\textnormal{ev}_i$. Hence, given a cycle $\mu$ in $\mathbb{P}^2$, there is a corresponding cycle 
$\textnormal{ev}_i^{*}(\mu)$ in $\overline{M}_{0,n}(\mathbb{P}^2, d)$; this denotes the subspace of curves, such that the 
$i^{\textnormal{th}}$ marked point intersects $\mu$. Let $\mathcal{H}$ be the divisor in $\overline{M}_{0,n}(\mathbb{P}^2, d)$ that 
corresponds to the subspace of curves, whose image passes through 
{a fixed} generic point. Now consider the four 
pointed moduli space and the forgetful map
\begin{align*}
\pi\ :\ \overline{M}_{0,4}(\mathbb{P}^2, d)\ &\longrightarrow\ \overline{M}_{0,4}. 
\end{align*}
The above map $\pi$ simply forgets the map to $\mathbb{P}^2$; if the domain of a map to $\mathbb{P}^2$ is unstable, then $\pi$ stabilizes it. 
Now note that $\overline{M}_{0,4}$ is path connected because it is isomorphic to $\mathbb{P}^1$; hence 
any two points determine the same divisors. Consequently, we have
\begin{align*}
(12|34) & = (13|24)
\end{align*}
as divisors in $\overline{M}_{0,4}$. So,
\begin{align}
\pi^*(12|34)\ & =\ \pi^*(13|24) \label{1234_eq_1324}
\end{align}
as cycles in $\overline{M}_{0,4}(\mathbb{P}^2, d)$. Now consider the following cycle
\begin{align*}
[\mathcal{Z}] \ &:=\ \textnormal{ev}_1^*(a^2)\cdot \textnormal{ev}_2^{*}(a^2)\cdot 
\textnormal{ev}_3^*(a)\cdot \textnormal{ev}_4^*(a)\cdot \mathcal{H}^{3d-4},
\end{align*}
where $a$ denotes the class of a line in $\mathbb{P}^2$. 
By intersecting the left hand side and right hand side of equation \eqref{1234_eq_1324}, 
we obtain an 
equality of numbers, namely 
\begin{align}
[\pi^*(12|34)]\cdot [\mathcal{Z}] & = [\pi^*(13|24)]\cdot [\mathcal{Z}]. \label{1234_eq_1324_number}
\end{align}
{This phenomena can be represented by the following picture:}
 \begin{figure}[hbt!]
\label{pic idea4}
\begin{center}\includegraphics[scale = 0.73]{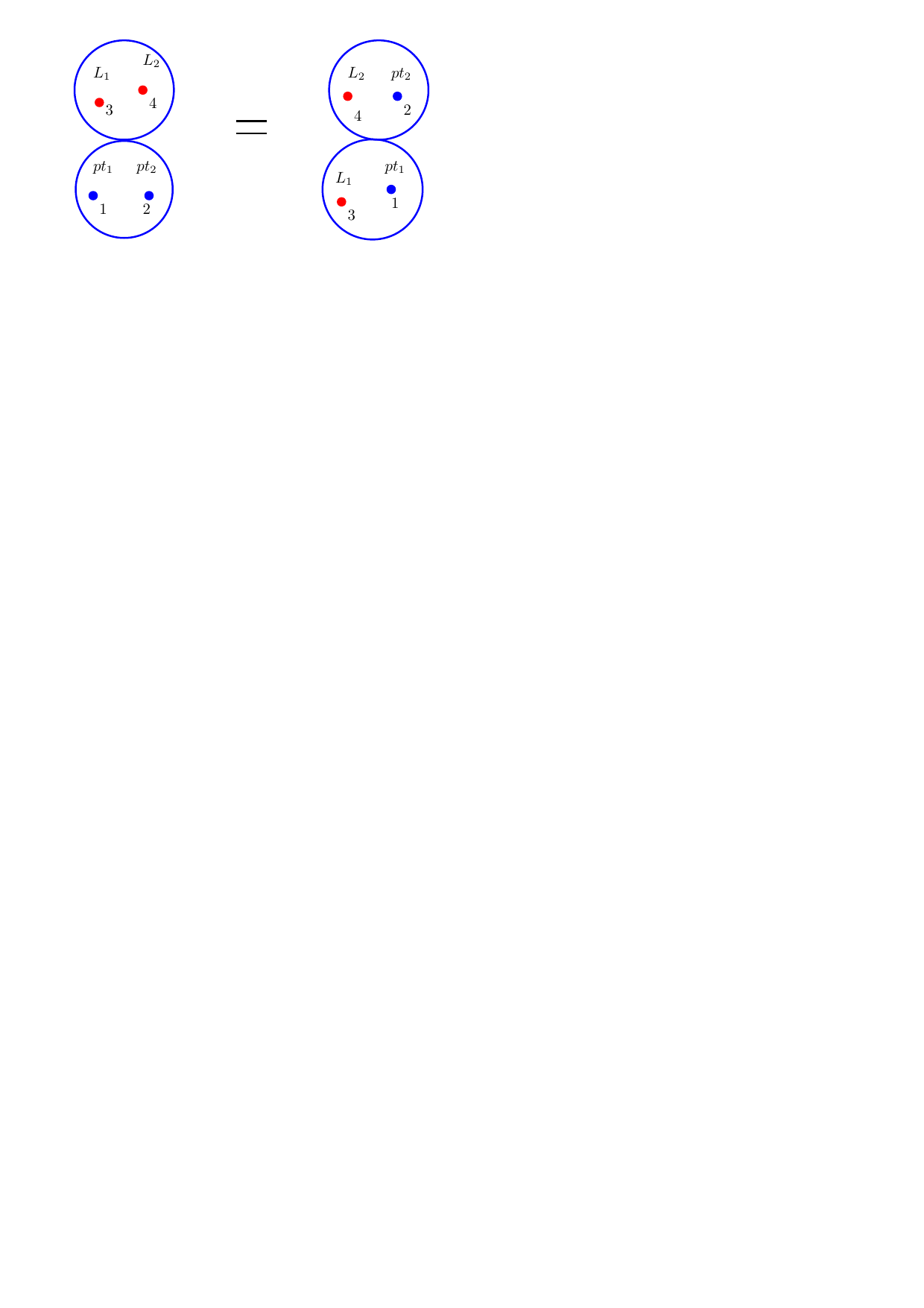}
\end{center}
\end{figure}
\FloatBarrier

We will now explain how to compute the left hand side of equation \eqref{1234_eq_1324_number}. 
First label the four ordered marked points (on the domain) as $y_1,\, y_2,\, y_3$ and $y_4$. 
Hence, the space $\pi^{-1}(12|34)$ comprises of bubble maps of type $(d_1,\, d_2)$. with $d_1+d_2\,=\,d$, 
where $y_1,\, y_2$ are on the $d_1$ component, while $y_3$ and $y_4$ are on the $d_2$ component. 
First consider the case where $d_1\,=\,0$ and $d_2\, =\,d$. This means that the $d_1$ component is constant. 
Therefore, it implies that its intersection with $\mathcal{Z}$ is zero, because we require $y_1$ and $y_2$ 
to go to two different general points. Hence, this component does not show up after intersecting with $\mathcal{Z}$. 
Next, consider the case where $d_1\,=\,d$ and $d_2\,=\,0$. Here, $y_3$ and $y_4$ are required to pass through 
two different generic lines; that means that the entire component gets mapped to a generic point (the intersection of 
the two generic lines). Furthermore, $y_1$ and $y_2$ get mapped to two generic points, and the entire image 
passes through $3d-4$ points. Therefore, the entire image passes through $3d-1$ points. Hence, what we get is 
\begin{align*}
\mathsf{N}^d_{3d-1}. 
\end{align*}
Consider the case where $d_1,\, d_2\,>\,0$. We will compute its intersection with $\mathcal{Z}$. 
Let $m_1+m_2 \,=\, 3d-4$, where $m_1,\, m_2 \,\geq\, 0$. 
Out of the $3d-4$ points, choose $m_2$ points and place the degree $d_2$ 
curve through them. Through the remaining $m_1$ points place a degree $d_1$ curve; this will actually 
pass through $m_1+2$ points, since $y_1$ and $y_2$ get mapped to two generic points. There are 
$d_1d_2$ choices for the nodal point. Finally, intersecting with $\textnormal{ev}_3^*(a)$ and 
$\textnormal{ev}_4^*(a)$ yields a factor of $d_2^2$. Therefore, what we get after intersecting with $\mathcal{Z}$ is 
\begin{align*}
\sum_{\substack{m_1+m_2=3d-4}}\binom{3d-4}{m_1} \mathsf{N}^{d_1}_{m_1+2} \mathsf{N}^{d_2}_{m_2} d_1d_2^3. 
\end{align*}
Hence, it follows that 
\begin{align}
[\pi^*(12|34)]\cdot [\mathcal{Z}]\ & =\ \mathsf{N}^d_{3d-1} + 
\sum_{\substack{m_1+m_2=3d-4, \\ d_1+d_2=d}}\binom{3d-4}{m_1} \mathsf{N}^{d_1}_{m_1+2} \mathsf{N}^{d_2}_{m_2} d_1d_2^3. \label{kr1}
\end{align}
Next, the right-hand side of equation \eqref{1234_eq_1324_number} will be computed. 
The space $\pi^{-1}(13|24)$ comprises of bubble maps of type $(d_1,\, d_2)$, with $d_1+d_2\,=\,d$, 
where $y_1,\, y_3$ are on the $d_1$ component, while $y_2$ and $y_4$ are on the $d_2$ component.
First consider the case where $d_1\,=\,0$ and $d_2 \,=\,d$. The intersection with $\mathcal{Z}$ will be 
zero, because $y_1$ gets mapped to a point and $y_3$ to a line and the map is a constant on that component. 
Similarly, the $d_1\,=\,d$ and $d_2 \,=\,0$ configuration will also produce zero after intersecting with $\mathcal{Z}$. 
Now consider the case where $d_1,\, d_2\,>\,0$. 
We will compute its intersection with $\mathcal{Z}$. 
Let $m_1+m_2 \,= \,3d-4$, where $m_1,\, m_2 \,\geq\, 0$. 
Out of the $3d-4$ points, choose $m_1$ points and place the degree $d_1$ 
curve through them. Through the remaining $m_2$ points place a degree $d_2$ curve. These 
two curves will actually be passing through $m_1+1$ and $m_2+1$ points, because $y_1$ and $y_2$ 
are mapped to points. There are 
$d_1d_2$ choices for the nodal point. Finally, intersecting with $\textnormal{ev}_3^*(a)$ and 
$\textnormal{ev}_4^*(a)$ give us a factor of $d_1 d_2$. 
Hence, what we get after intersecting with $\mathcal{Z}$ is 
\begin{align*}
\sum_{\substack{m_1+m_2=3d-4}}\binom{3d-4}{m_1} \mathsf{N}^{d_1}_{m_1+1} \mathsf{N}^{d_2}_{m_2+1} d_1^2d_2^2. 
\end{align*}
Therefore, it follows that 
\begin{align}
[\pi^*(13|24)]\cdot [\mathcal{Z}]\ & =\
\sum_{\substack{m_1+m_2=3d-4, \\ d_1+d_2=d}}\binom{3d-4}{m_1} \mathsf{N}^{d_1}_{m_1+1} \mathsf{N}^{d_2}_{m_2+1} d_1^2d_2^2. 
\label{kr2}
\end{align}
Equating the right hand sides of equations \eqref{kr1} and \eqref{kr2}, it follows that 
\begin{align*}
\mathsf{N}^d_{3d-1}\ & =\ \sum_{\substack{m_1+m_2=3d-4, \\ d_1+d_2=d}}\binom{3d-4}{m_1} \Big(
\mathsf{N}^{d_1}_{m_1+1} \mathsf{N}^{d_2}_{m_2+1} d_1^2d_2^2-\mathsf{N}^{d_1}_{m_1+2} \mathsf{N}^{d_2}_{m_2} d_1d_2^3\Big).
\end{align*}
Combined with the knowledge that $\mathsf{N}^1_{2}\,=\,1$, what the above formula gives is the characteristic 
number of rational degree $d$ curves in $\mathbb{P}^2$. This is the famous Kontsevich's recursion formula. 
The goal of our paper is to explain how this idea can be extended to compute the characteristic number of 
rational cuspidal curves.

\section{Derivation of recursive formula for cuspidal curves}
\label{derivation_cupidal_curves}

It will be explained how the recursive formula is obtained. 
As before, we will do intersection theory on $\overline{M}_{0,4}(\mathbb{P}^2, d)$. 
Let $\mathcal{S}_{0,4}(\mathbb{P}^2, d)$ denote the following subset of 
$M_{0,4}(\mathbb{P}^2, d)$: it is the space of all
rational curves (with a smooth domain) such that it has a cusp at the first marked point. Its closure
inside $\overline{M}_{0,4}(\mathbb{P}^2, d)$ will be denoted by $\overline{\mathcal{S}}_{0,4}
(\mathbb{P}^2, d)$. Consider the forgetful map 
\begin{align*}
\pi\ : \ \overline{M}_{0,4}(\mathbb{P}^2, d)\ \longrightarrow\ \overline{M}_{0,4}. 
\end{align*}
The reader may be reminded that this is not quite the forgetful map; this forgets the map and if the resulting domain is 
unstable, the map $\pi$ stabilizes it. Define the cycle 
\begin{align*}
\mathcal{Z}\ :=\ \textnormal{ev}_1^*(a^n) \textnormal{ev}_2^*(a^2)\textnormal{ev}_3^*(a)\textnormal{ev}_4^*(a) \mathcal{H}^{3d-4-n}. 
\end{align*}
This cycle, when intersected with 
\[[\pi^{*}(12|34)]\cdot [\overline{\mathcal{S}}_{0,4}(\mathbb{P}^2, d)]\]
in $\overline{M}_{0,4}(\mathbb{P}^2, d)$, produces a number. 
Hence, the goal is to identify all the components of the cycle 
\[[\overline{\mathcal{S}}_{0,4}(\mathbb{P}^2, d)]\cdot [\pi^*(12|34)],\] 
which will make it possible to compute its 
intersection with $\mathcal{Z}$. 
The space $[\overline{\mathcal{S}}_{0,4}(\mathbb{P}^2, d)]\cdot [\pi^*(12|34)]$ will comprise of 
components of degree $d_1,\, d_2$, where $d_1+d_2 \,=\, d$. 
As before, label the marked points as 
$y_1$, $y_2$, $y_3$ and $y_4$. 
The points $y_1$ and $y_2$ lie on the $d_1$ component, while the 
points $y_3$ and $y_4$ lie on the $d_2$ component.

First consider the component of 
$[\overline{\mathcal{S}}_{0,4}(\mathbb{P}^2, d)]\cdot [\pi^*(12|34)]$ that corresponds to the 
breakup $d_1 = 0$ and $d_2 =d$. This happens when $y_1$ and $y_2$ come together in 
$\mathcal{S}_{0,4}(\mathbb{P}^2, d)$. The resulting object is a cuspidal degree $d$ curve 
with the marked points $y_3$ and $y_4$ on them and a constant component bubble attached 
at the cuspidal point, with the marked points $y_1$ and $y_2$ on this component. The intersection of 
this component with $\mathcal{Z}$ will produce
\begin{align*}
d^2\mathsf{C}^{d}_{3d-4-n}(n+2). 
\end{align*}
{To see why this is the case}, 
note that after intersection with $\mathcal{Z}$, the cusp will have to lie on the cycle $a^{n+2}$ 
(intersection of $n+2$ generic lines). Furthermore, intersection with $\textnormal{ev}_3^{*}(a)$ and $\textnormal{ev}_4^{*}(a)$ 
multiply by a factor of $d$ each time.

Next, consider the component of 
$[\overline{\mathcal{S}}_{0,4}(\mathbb{P}^2, d)]\cdot [\pi^*(12|34)]$ that corresponds to the 
breakup where $d_1 \,=\, d$ and $d_2 \,=\,0$. This happens when $y_3$ and $y_4$ come together in 
$\mathcal{S}_{0,4}(\mathbb{P}^2, d)$. The resulting object is a cuspidal degree $d$ curve 
with the marked points $y_1$ and $y_2$ on them (with $y_1$ the 
cuspidal point) and a constant component bubble attached 
with the marked points $y_3$ and $y_4$ on that bubble. 
The intersection of 
this component with $\mathcal{Z}$ will give us 
\begin{align*}
\mathsf{C}^{d}_{3d-2-n}(n). 
\end{align*}
{To see this}, note that the $d_2$ component (which is constant) will have to pass through 
the intersection of two lines (due to intersection with $\textnormal{ev}_3^{*}(a)$ and $\textnormal{ev}_4^{*}(a)$). 
The $y_2$ point also maps to a point (due to intersection with $\textnormal{ev}_2^{*}(a^2)$). 
Hence, the total configuration passes through $3d-2-n$ points. Also note that the degree $d_1$ component 
(which is of degree $d$) has a cusp at the point $y_1$ and the cusp lies at the intersection of $n$ lines
(due to intersection with $\textnormal{ev}_2^{*}(a^n)$). That precisely produces 
$\mathsf{C}^{d}_{3d-2-n}(n)$.

Now consider the component of 
$[\overline{\mathcal{S}}_{0,4}(\mathbb{P}^2, d)]\cdot [\pi^*(12|34)]$ that corresponds to the 
breakup $d_1\,>\,0$ and $d_2\,>\,0$. There will be two types of components here. The first type of 
component will certainly be a degree $d_1$ curve and a degree $d_2$ curve, with the degree $d_1$ 
curve having a cusp at $y_1$. Furthermore, the point $y_2$ is also on the degree $d_1$ component 
and the points $y_3$ and $y_4$ are on the degree $d_2$ component.

Intersecting this with $\mathcal{Z}$ gives us 
\begin{align*}
\sum_{m_1+m_2=3d-4-n} \binom{3d-4-n}{m_1}\mathsf{C}^{d_1}_{m_1+1}(n) \mathsf{N}^{d_2}_{m_2}d_1 d_2^3. 
\end{align*}
{To see this}, we do the following: choose $m_1$ points out of the available $3d-4-n$ points and 
place the degree $d_1$ curve through it. This curve will actually be passing through $m_1+1$ points, because 
we are intersecting with $\textnormal{ev}_2^*(a^2)$. Through the remaining $m_2$ points, we place the degree 
$d_2$ curve. There is a factor of $d_1 d_2$ to account for the nodal point. There is also a further factor of $d_2^2$ 
because we are intersecting with $\textnormal{ev}_3^*(a)\textnormal{ev}_4^*(a)$.

Now we come to the main non trivial geometric input that we use; this assertion is 
for the moment a conjecture. 
There is a second type of component.
This comprises a degree $d_1$ curve, a ghost bubble, and a degree $d_2$ curve attached to the ghost bubble. 
The point $y_2$ lies on the degree $d_1$ component. The points $y_3$ and $y_4$ lie on the degree $d_2$ 
component. The point $y_1$ lies on the ghost bubble. Furthermore, the images of the degree $d_1$ and $d_2$ 
are tangent at the image of $y_1$. Basically these are two component curves, intersecting tangentially at the 
nodal point. 
This degeneration can be observed by the following picture:
\begin{figure}[hbt!]
\label{pic idea5}
\begin{center}\includegraphics[scale = 0.8]{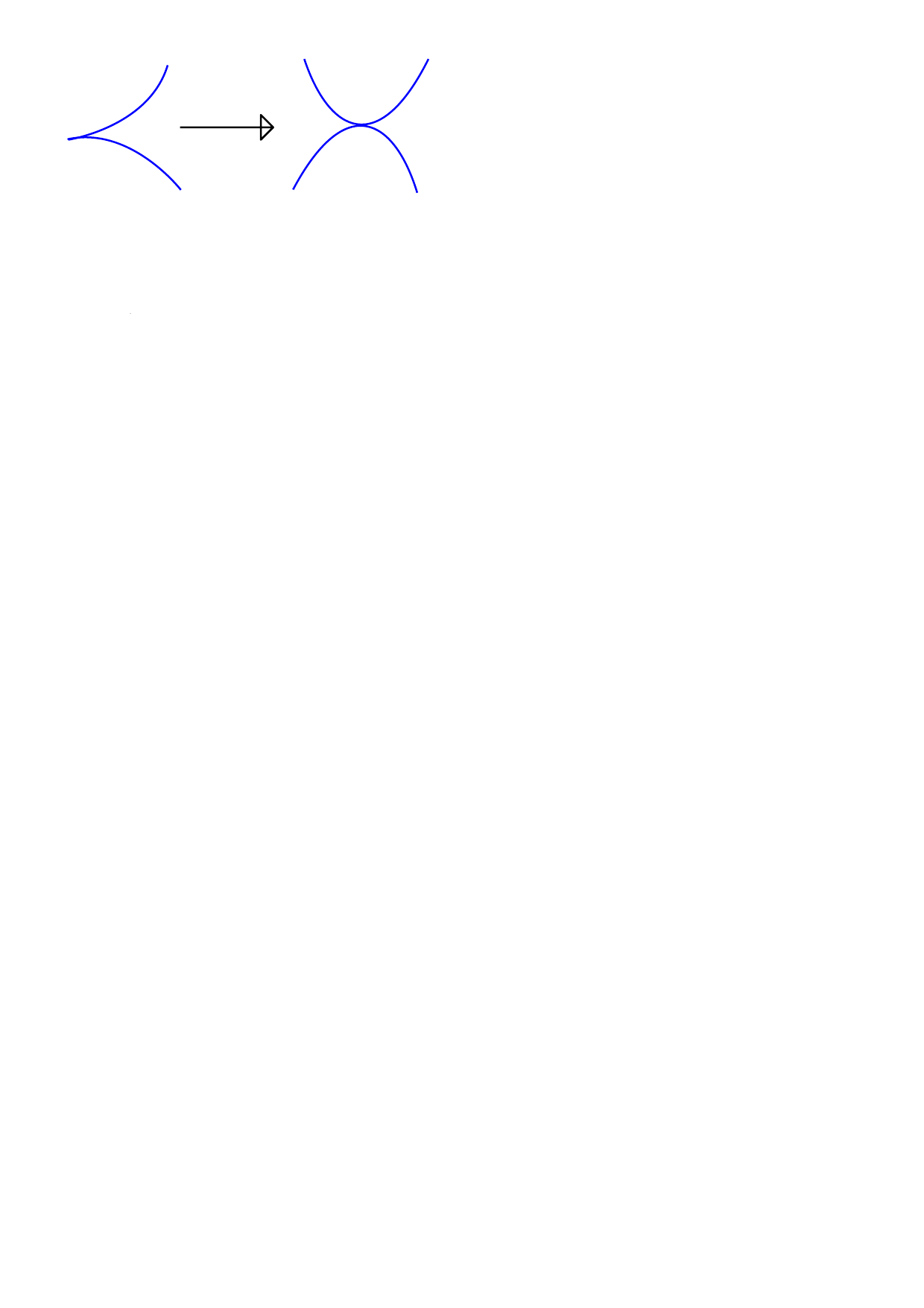}
\end{center}
\end{figure}
\FloatBarrier

Now note that 
intersecting this component with $\mathcal{Z}$ gives us
\begin{align*}
\sum_{m_1+m_2=3d-4-n} \binom{3d-4-n}{m_1} \mathsf{T}^{d_1, d_2}_{m_1+1, m_2}(n) d_2^2. 
\end{align*}
{To see this}, we place the degree $d_1$ curve through $m_1$ points. This will actually pass through
$m_1+1$ points because 
we are intersecting with $\textnormal{ev}_2^*(a^2)$. We also place the degree $d_2$ curve through $m_2$ 
points. There is no factor of $d_1 d_2$ in this case, since only one point of intersection is being counted for 
the nodal point (in the remaining points, they won't be intersecting tangentially). The factor of $d_2^2$ is there 
because we are intersecting with $\textnormal{ev}_3^*(a)\textnormal{ev}_4^*(a)$.

Hence, we conclude that 
\begin{align}
[\overline{\mathcal{S}}_{0,4}(\mathbb{P}^2, d)]\cdot [\pi^*(12|34)]\cdot \mathcal{Z} & = 
\mathsf{C}^{d}_{3d-2-n}(n) + d^2\mathsf{C}^{d}_{3d-4-n}(n+2)\nonumber \\ 
& + \sum_{\substack{d_1+d_2=d, \\m_1+m_2 = 3d-4-n}} \binom{3d-4-n}{m_1} 
\Big( \mathsf{C}^{d_1}_{m_1+1}(n) \mathsf{N}^{d_2}_{m_2}d_1 d_2^3 + \mathsf{T}^{d_1, d_2}_{m_1+1, m_2}(n) d_2^2\Big). \label{12_34_cusp}
\end{align}
To analyze the cycle $[\overline{\mathcal{S}}_{0,4}(\mathbb{P}^2, d)]\cdot [\pi^*(13|24)]$, note that this 
comprises of two component curves of degree $d_1$ and $d_2$, where the points $y_1$ and $y_3$ lie on the 
$d_1$ component, while the points $y_2$ and $y_4$ lie on the degree $d_2$ component.

Now start with the breakup $d_1\,=\,0$ and $d_2\,=\,d$. 
This happens when $y_1$ and $y_3$ come together in 
$\mathcal{S}_{0,4}(\mathbb{P}^2, d)$. The resulting object is a cuspidal degree $d$ curve 
with the marked points $y_2$ and $y_4$ on them and a constant component bubble attached 
at the cuspidal point, with the marked points $y_1$ and $y_3$ on this component. The intersection of 
this component with $\mathcal{Z}$ will produce
\begin{align*}
d\mathsf{C}^{d}_{3d-3-n}(n+1). 
\end{align*}
{Indeed after} intersection with $\mathcal{Z}$, the cusp will have to lie on the cycle $a^{n+1}$ 
(intersection of $n+1$ lines); this is because we are intersecting with $\textnormal{ev}_1^*(a^n) \textnormal{ev}_3^*(a)$.
Next, note that intersection with $\textnormal{ev}_2^{*}(a^2)$ makes the curve pass through $3d-3-n$ points. 
Finally, intersecting with $\textnormal{ev}_4^{*}(a)$, multiplies the by a factor of $d$.

Next, consider the breakup $d_1=d$ and $d_2=0$. 
This happens when $y_2$ and $y_4$ come together in 
$\mathcal{S}_{0,4}(\mathbb{P}^2, d)$. The resulting object is a cuspidal degree $d$ curve 
with the marked points $y_1$ and $y_3$ on them with $y_1$ being the cuspidal point. 
Furthermore, a ghost bubble is attached to this map and the points $y_2$ and $y_4$ lie 
on this component. 
The intersection of 
this component with $\mathcal{Z}$ will give us zero. This is because we are intersecting with 
$\textnormal{ev}_2^{*}(a^2)\textnormal{ev}_4^{*}(a)$; this means the constant component has 
to map to the intersecting of three generic lines, which is empty.

Now consider the component of 
$[\overline{\mathcal{S}}_{0,4}(\mathbb{P}^2, d)]\cdot [\pi^*(13|24)]$ that corresponds to the 
breakup $d_1>0$ and $d_2>0$. As before, there will be two types of components here. The first type of 
component will be a degree $d_1$ curve and a degree $d_2$ curve, with the degree $d_1$ 
curve having a cusp at $y_1$. Furthermore, the point $y_3$ is also on the degree $d_1$ component 
and the points $y_2$ and $y_4$ are on the degree $d_2$ component.

Intersecting this with $\mathcal{Z}$ gives us
\begin{align*}
\sum_{m_1+m_2=3d-4-n} \binom{3d-4-n}{m_1}\mathsf{C}^{d_1}_{m_1}(n) \mathsf{N}^{d_2}_{m_2+1}d_1^2 d_2^2. 
\end{align*}
This is similar to the earlier reasoning.

Next, as before there is a second type of component that is 
there. This comprises of a degree $d_1$ curve, a ghost bubble and a degree $d_2$ curve attached to the ghost bubble. 
The point $y_3$ lies on the degree $d_1$ component. The points $y_2$ and $y_4$ lie on the degree $d_2$ 
component. The point $y_1$ lies on the ghost bubble. Furthermore, the images of the degree $d_1$ and $d_2$ 
are tangent at the image of $y_1$. Basically these are two component curves, intersecting tangentially at the 
nodal point. Now note that 
intersecting this component with $\mathcal{Z}$ gives us 
\begin{align*}
\sum_{m_1+m_2=3d-4-n} \binom{3d-4-n}{m_1} \mathsf{T}^{d_1, d_2}_{m_1, m_2+1}(n)d_1 d_2.
\end{align*}
The factor of $d_1d_2$ is there because we intersect with $\textnormal{ev}_3^*(a)\textnormal{ev}_4^*(a)$ 
(as before there is no factor of $d_1 d_2$ for the bubble point, since only one of them is tangential).

Hence, we conclude that 
\begin{align}
\begin{split}
[\overline{\mathcal{S}}_{0,4}(\mathbb{P}^2, d)]\cdot [\pi^*(13|24)]\cdot \mathcal{Z} & = d\mathsf{C}^{d}_{3d-3-n}(n+1) \\ 
& + \sum_{\substack{d_1+d_2=d, \\m_1+m_2 = 3d-4-n}} \binom{3d-4-n}{m_1} 
\Big(\mathsf{C}^{d_1}_{m_1}(n) \mathsf{N}^{d_2}_{m_2+1}d_1^2 d_2^2+ \mathsf{T}^{d_1, d_2}_{m_1, m_2+1}(n)d_1 d_2\Big). 
\label{13_24_cusp}
\end{split}
\end{align}
Equating the right hand sides of equations \eqref{12_34_cusp} and \eqref{13_24_cusp}, we get the recursion formula 
\eqref{cusp_main_formula_wdvv} in Theorem \ref{mt_cusp}. 

\section{Intersection of Tautological Classes} 
\label{itc_wdvv}

It remains to compute the number $\mathsf{T}^{d_1, d_2}_{m_1, m_2}(n)$. Before it is explained how to 
compute that, we will recapitulate some basic facts about the intersection of tautological classes. 
First consider the moduli space of curves with one marked point, namely 
\begin{align*}
\overline{M}_{0,1}(\mathbb{P}^2, d). 
\end{align*}
On top of this space, there is the universal tangent bundle, given by 
\begin{align*}
\mathbb{L}\ \longrightarrow\ \overline{M}_{0,1}(\mathbb{P}^2, d). 
\end{align*}
Define 
\begin{align*}
\Phi_d(k,j,m)\ :=\ \Big\langle c_1(\mathbb{L}^*)^k\cdot \textnormal{ev}^*(a^j) \cdot \mathcal{H}^m, 
~\overline{M}_{0,1}(\mathbb{P}^2, d) \Big \rangle\ \in\ {\mathbb Z}.
\end{align*}
The above number $\Phi_d(k,j,m)$ is declared to be zero, unless $k+j+m\,=\,3d$.

To compute $\mathsf{T}^{d_1, d_2}_{m_1, m_2}(n)$, it will be necessary to 
know how to compute $\Phi_d(0,j,m)$
and $\Phi_d(1,j,m)$. Before the formula is given, 
we will also need to consider the two marked moduli space, namely 
\begin{align*}
\overline{M}_{0,2}(\mathbb{P}^2, d). 
\end{align*}
Denote by $\mathbb{L}$ the universal tangent bundle on $\overline{M}_{0,2}(\mathbb{P}^2, d)$ over the 
first marked point. Define 
\begin{align*}
{\Phi_d^{(2)}(k,j_1, j_2,m)\ :=\ \Big\langle c_1(\mathbb{L}^*)^k\cdot \textnormal{ev}_1^*(a^{j_1}) \cdot \textnormal{ev}_2^*(a^{j_2}) \cdot \mathcal{H}^m, 
~\overline{M}_{0,2}(\mathbb{P}^2, d) \Big \rangle}.
\end{align*}
Now note that 
\begin{align}
\Phi_{d}(0, j, m) & =~ 0 \qquad \hspace*{2cm}\textnormal{if} ~~ j=0, \nonumber \\
& =~ d~ \mathsf{N}_{3d-1}^{d}, \qquad \hspace*{0.8cm}\textnormal{if} ~~j=1 \qquad \textnormal{and} ~~m = 3d-1, \nonumber \\ 
& =~ 0 \qquad \hspace*{2cm}\textnormal{if} ~~j =1 \qquad \textnormal{and} ~~m \neq 3d-1, \nonumber \\ 
 & =~ \mathsf{N}_{3d-1}^{d} \qquad \hspace*{1.25cm}\textnormal{if} ~~j=2 \qquad \textnormal{and} ~~m = 3d-2, \nonumber \\ 
& =~ 0 \qquad \hspace*{2cm}\textnormal{if} ~~j=2 \qquad \textnormal{and} ~~m \neq 3d-2, \nonumber \\ 
& =~ 0 \qquad \hspace*{2cm}\textnormal{if} ~~j\geq 3. \label{Phi_0_rec_base}
\end{align}
Also note that 
\begin{align}
\Phi_d^{(2)}(0,j_1, 1,m)&= d~ \Phi_d(0,j_1,m) \qquad \textnormal{and} \qquad
{\Phi_d^{(2)}(0,j_1, 2,m) =  \Phi_d(0,j_1,m+1)}. \label{phi_2_to_1_conv}
\end{align}
We are now ready to give the formula for $\Phi_d(1,j,m)$.

\begin{lmm}
\label{itc_c1_pow_k}
The intersection numbers $\Phi_{d}(1, \alpha, m)$ are given by the recursive formula 
\begin{align}
\Phi_{d}(1, j, m) &\ =\ \frac{1}{d^2} \Phi_{d}(0, j, m+1)
-\frac{2}{d} \Phi_{d}(0, j+1, m) \nonumber \\ 
+& \sum_{\substack{m_1+ m_2 = m, \\ d_1 + d_2 = d, ~~d_1, d_2 \neq 0, \\ 
\mu, \nu = 0 ~\textnormal{to} ~2}} \binom{m}{m_1} g^{\mu \nu}\frac{d_2^2}{d^2} 
\Phi_{d_1}^{(2)}(0, j, \mu, m_1) 
\Phi_{d_2}(0, \nu, m_2). \label{phi_k_algo}
\end{align}
\end{lmm}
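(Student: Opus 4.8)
The plan is to realize the entire recursion as the intersection of a single linear–equivalence relation on $\overline{M}_{0,1}(\mathbb{P}^2,d)$ against the cycle $\textnormal{ev}^*(a^j)\cdot\mathcal{H}^m$. Concretely, I would first establish the divisor relation
$$ c_1(\mathbb{L}^*)\ =\ \frac{1}{d^2}\,\mathcal{H}\ -\ \frac{2}{d}\,\textnormal{ev}^*(a)\ +\ \sum_{\substack{d_1+d_2=d,\\ d_1,d_2\geq 1}}\frac{d_2^2}{d^2}\,[D_{d_1,d_2}] $$
in $\textnormal{Pic}(\overline{M}_{0,1}(\mathbb{P}^2,d))\otimes\mathbb{Q}$, where $D_{d_1,d_2}$ is the boundary divisor whose generic element is a two–component stable map of degrees $d_1$ and $d_2$ with the marked point on the degree $d_1$ component. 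Once this relation is in hand, capping it with $\textnormal{ev}^*(a^j)\cdot\mathcal{H}^m$ and pairing against the fundamental class reproduces \eqref{phi_k_algo} term by term: the first two summands give $\frac{1}{d^2}\Phi_d(0,j,m+1)$ and $-\frac{2}{d}\Phi_d(0,j+1,m)$ directly from the definition of $\Phi_d$, while the boundary sum produces the remaining terms.

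To prove the divisor relation I would argue in two stages. Since for $d\geq 1$ the rational Picard group of $\overline{M}_{0,1}(\mathbb{P}^2,d)$ is spanned by $\mathcal{H}$, $\textnormal{ev}^*(a)$ and the boundary divisors $D_{d_1,d_2}$, the class $c_1(\mathbb{L}^*)$ is some $\mathbb{Q}$–combination of these, so only the coefficients remain to be pinned down. I would determine them by pairing both sides against a spanning family of test curves. Sweeping the marked point along a fixed generic immersed degree $d$ curve gives a $\mathbb{P}^1$ on which $c_1(\mathbb{L}^*)$ has degree $-2$ (the cotangent line of the domain) and $\textnormal{ev}^*(a)$ has degree $d$, while $\mathcal{H}$ and every $D_{d_1,d_2}$ are disjoint from it; this forces the coefficient of $\textnormal{ev}^*(a)$ to be $-2/d$. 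Pairing against a pencil of curves through a fixed marked point, and against test curves degenerating into a prescribed $(d_1,d_2)$ boundary stratum, then fixes the coefficients $1/d^2$ and $d_2^2/d^2$ respectively. The content here is really the comparison of the cotangent line $\mathbb{L}$ with the pullback of $\mathcal{O}_{\mathbb{P}^2}(1)$ under the degree $d$ universal map, and I expect this to be the crux of the argument: getting the normalizing powers of $d$ and the boundary weight $d_2^2/d^2$ exactly right, and verifying that no boundary divisor is omitted, is where all the care lies.

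For the boundary contribution I would use the standard splitting description of $D_{d_1,d_2}$ as the image of
$$ \overline{M}_{0,2}(\mathbb{P}^2,d_1)\times_{\mathbb{P}^2}\overline{M}_{0,1}(\mathbb{P}^2,d_2), $$
the fibre product over the common value of the node, with the marked point $1$ and the node marking the first factor and the node marking the second. Restricting $\textnormal{ev}^*(a^j)\cdot\mathcal{H}^m$ to this locus, the condition at the marked point contributes $a^j$ on the first factor; the $m$ point conditions split, each point being imposed on the degree $d_1$ image or the degree $d_2$ image, which yields the factor $\binom{m}{m_1}$; and the gluing along the node replaces the diagonal of $\mathbb{P}^2$ by $\sum_{\mu,\nu}g^{\mu\nu}\,\textnormal{ev}^*(a^\mu)\boxtimes\textnormal{ev}^*(a^\nu)$. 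Evaluating the two factors then produces exactly $\Phi^{(2)}_{d_1}(0,j,\mu,m_1)$ and $\Phi_{d_2}(0,\nu,m_2)$, and multiplying by the weight $d_2^2/d^2$ from the divisor relation gives the summation in \eqref{phi_k_algo}. Assembling the three pieces completes the proof.
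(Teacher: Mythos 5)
Your proposal follows the same skeleton as the paper's proof: express $c_1(\mathbb{L}^*)$ on $\overline{M}_{0,1}(\mathbb{P}^2,d)$ as a rational combination of $\mathcal{H}$, $\textnormal{ev}^*(a)$ and the boundary divisors, cap with $\textnormal{ev}^*(a^j)\mathcal{H}^m$, and evaluate the boundary contribution by realizing $\mathcal{B}_{d_1,d_2}$ as a fibre product over $\mathbb{P}^2$, splitting the point conditions binomially and replacing the diagonal by its K\"unneth decomposition $\sum_{\mu,\nu} g^{\mu\nu}\, a^{\mu}\otimes a^{\nu}$ --- this last step is essentially word-for-word the paper's derivation of \eqref{ci_k_bd}. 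The one genuine difference is how the divisor relation is obtained: the paper does not prove it but cites \cite[Lemma 2.3]{Ionel_genus_one} and \cite[Lemma 3.1]{BMT1}, whereas you propose to derive it from the spanning of $\mathrm{Pic}(\overline{M}_{0,1}(\mathbb{P}^2,d))\otimes\mathbb{Q}$ by $\mathcal{H}$, $\textnormal{ev}^*(a)$ and the $D_{d_1,d_2}$, pinning down coefficients with test curves. Your sweeping-point test curve correctly forces the coefficient $-2/d$ (it avoids $\mathcal{H}$ and all boundary divisors, so the equation is clean); the coefficients $1/d^2$ and $d_2^2/d^2$ are asserted rather than computed, and you should note that the test curves needed for them (a pencil, and curves entering a boundary stratum) do meet the boundary, so those equations mix the unknowns and you must check that the resulting linear system is invertible. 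That portion of your argument is therefore a sketch, though the strategy is standard and sound, and it attempts more than the paper itself does. One observation worth recording: your form of the relation, with the term $-\frac{2}{d}\,\textnormal{ev}^*(a)$, is the correct one. The paper's equation \eqref{chern_class_divisor} as printed, $c_1(\mathbb{L}^*)=\frac{1}{d^2}\big(\mathcal{H}-2\,\textnormal{ev}^*(a)+\sum d_2^2\,\mathcal{B}_{d_1,d_2}\big)$, carries $-\frac{2}{d^2}\textnormal{ev}^*(a)$, which is off by a factor of $d$ and cannot literally produce the term $-\frac{2}{d}\Phi_d(0,j+1,m)$ in \eqref{phi_k_algo}. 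A direct check with $d=2$, $j=1$, $m=4$ confirms this: the pencil of conics through four general points with the marked point on a general line gives $\Phi_2(1,1,4)=1$, while $\Phi_2(0,1,5)=2$, $\Phi_2(0,2,4)=1$, and $\langle\mathcal{B}_{1,1}\,\textnormal{ev}^*(a)\,\mathcal{H}^4\rangle=6$, so that $1=\frac{1}{4}\cdot 2-\frac{2}{2}\cdot 1+\frac{1}{4}\cdot 6$, consistent with the coefficient $-2/d$ of your version and of \eqref{phi_k_algo}, but not with $-2/d^2$.
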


\begin{proof}
First note that on 
$\overline{M}_{0,1}(\mathbb{P}^2, d)$, the following 
equality of divisors holds:
\begin{equation}
c_1(\mathbb{L}^{*}) \,=\, \frac{1}{d^2}
\Big( \mathcal{H} -2 \textnormal{ev}^*(a) +
\sum_{\substack{d_1+ d_2= d, \\ d_1, d_2 \neq 0}}
d_2^2 \mathcal{B}_{d_1, d_2} \Big)\, ,\label{chern_class_divisor}
\end{equation}
where 
$\mathcal{B}_{d_1, d_2}$ denotes the
boundary stratum corresponding to the splitting into a
degree $d_1$ curve and degree $d_2$ curve with the last marked point
lying on the degree $d_1$ component. This is proved in \cite[Lemma 2.3]{Ionel_genus_one} and 
also in 
\cite[Lemma 3.1]{BMT1}. Now note that the first two terms in the right hand side of equation \eqref{phi_k_algo} 
follow immediately from equation \eqref{chern_class_divisor}; just multiply both sides of the equation with 
$\textnormal{ev}^*(a^j)\mathcal{H}^{m}$ and then evaluate it on $[\overline{{M}}_{0,1}(\mathbb{P}^2, d)]$. 

We will show that 
\begin{align}
\Big\langle \textnormal{ev}^*(a^j)\mathcal{H}^m\mathcal{B}_{d_1, d_2}, ~\overline{{M}}_{0,1}(\mathbb{P}^2, d)
\Big\rangle &=
\sum_{\substack{m_1+ m_2 = m,\\ d_1 + d_2 = d, ~d_1, d_2 \neq 0, \\ 
                             \mu, \nu = 0 ~\textnormal{to} ~2}} \binom{m}{m_1} g^{\mu \nu} 
                          \Phi_{d_1}^{(2)}(0, j, \mu, m_1) 
                                                  \Phi_{d_2}(0, \nu, m_2). \label{ci_k_bd} 
\end{align}
Equation \eqref{ci_k_bd} immediately gives us the last term of equation \eqref{phi_k_algo} 
(the term inside the summation). This gives us equation \eqref{phi_k_algo}.

In order to prove equation \eqref{ci_k_bd}, we do the following. 
Consider the spaces 
\begin{align*}
\overline{{M}}_{0,2}(\mathbb{P}^2, d_1)& \qquad \textnormal{and} \qquad \overline{{M}}_{0,1}(\mathbb{P}^2, d_2).
\end{align*}
Label the two points on $\overline{{M}}_{0,2}(\mathbb{P}^2, d_1)$ as $y_1$ and $y_2$ and the point on 
$\overline{{M}}_{0,1}(\mathbb{P}^2, d_2)$ as $y_3$. Now consider the map 
\[
\textnormal{ev}_2\times \textnormal{ev}_{3}\,:\, \overline{{M}}_{0,2}(\mathbb{P}^2, d_1) \times 
\overline{{M}}_{0,1}(\mathbb{P}^2, d_2) \,\longrightarrow\, \mathbb{P}^2\times \mathbb{P}^2,
\]
where $\textnormal{ev}_i$ denotes evaluation at $y_i$. 
If $\Delta_{\mathbb{P}^2}$ denotes the diagonal in $\mathbb{P}^2\times \mathbb{P}^2$, then note that 
\begin{align}
\mathcal{B}_{d_1, d_2}\ \approx\ 
(\textnormal{ev}_2\times \textnormal{ev}_3)^{-1}(\Delta_{\mathbb{P}^2})\ \subset\ 
\overline{{M}}_{0,2}(\mathbb{P}^2, d_1)\times \overline{{M}}_{0,1}(\mathbb{P}^2, d_2).\label{b_delta}
\end{align}
Let $\mathcal{H}_1$ and $\mathcal{H}_2$ correspond to the divisors in $\overline{{M}}_{0,2}(\mathbb{P}^2, d_1)$ 
and $\overline{{M}}_{0,1}(\mathbb{P}^2, d_2)$, where the image of the curve passes through a generic point. Hence, 
intersecting $\mathcal{B}_{d_1, d_2}$ with $\mathcal{H}^{m_1}_1\mathcal{H}^{m_2}_2$ corresponds to the 
degree $d_1$ curve passing through $m_1$ points and the degree $d_2$ curve passing through $m_2$ points. Therefore,
we conclude that 
\begin{align}
[\mathcal{B}_{d_1, d_2}] \textnormal{ev}^*(a^j)\cdot \mathcal{H}^m\ = \
\sum_{m_1+m_2=m} \binom{m}{m_1}[\mathcal{B}_{d_1, d_2}]\textnormal{ev}^*(a^j) \mathcal{H}_1^{m_1}\mathcal{H}_2^{m_2}.
\label{b_delta2}
\end{align}
Equation \eqref{b_delta}, combined with \eqref{b_delta2} gives us \eqref{ci_k_bd} (using the expression for the diagonal 
of $\mathbb{P}^2$).
\end{proof}

\section{Computation of two component tangential curves}
\label{T_num}
We are now ready to compute $\mathsf{T}^{d_1, d_2}_{m_1, m_2}(n)$, 
the number of two component rational curves of degree $d_1$ and $d_2$ such that the $d_1$ component passes through $m_1$ 
points, the $d_2$ component passes through $m_2$ points, they are tangent to each other at the nodal point and the nodal point 
passes through the intersection of $n$ generic lines, if $m_1+m_2+n+1 = 3(d_1+d_2)-2$. For instance, when $d_1 = 1$, i.e., one of the components is a line, this locus is very classical. The following classical picture can represent an element in this case:
\begin{figure}[hbt!]
\label{pic idea6}
\begin{center}\includegraphics[scale = 0.7]{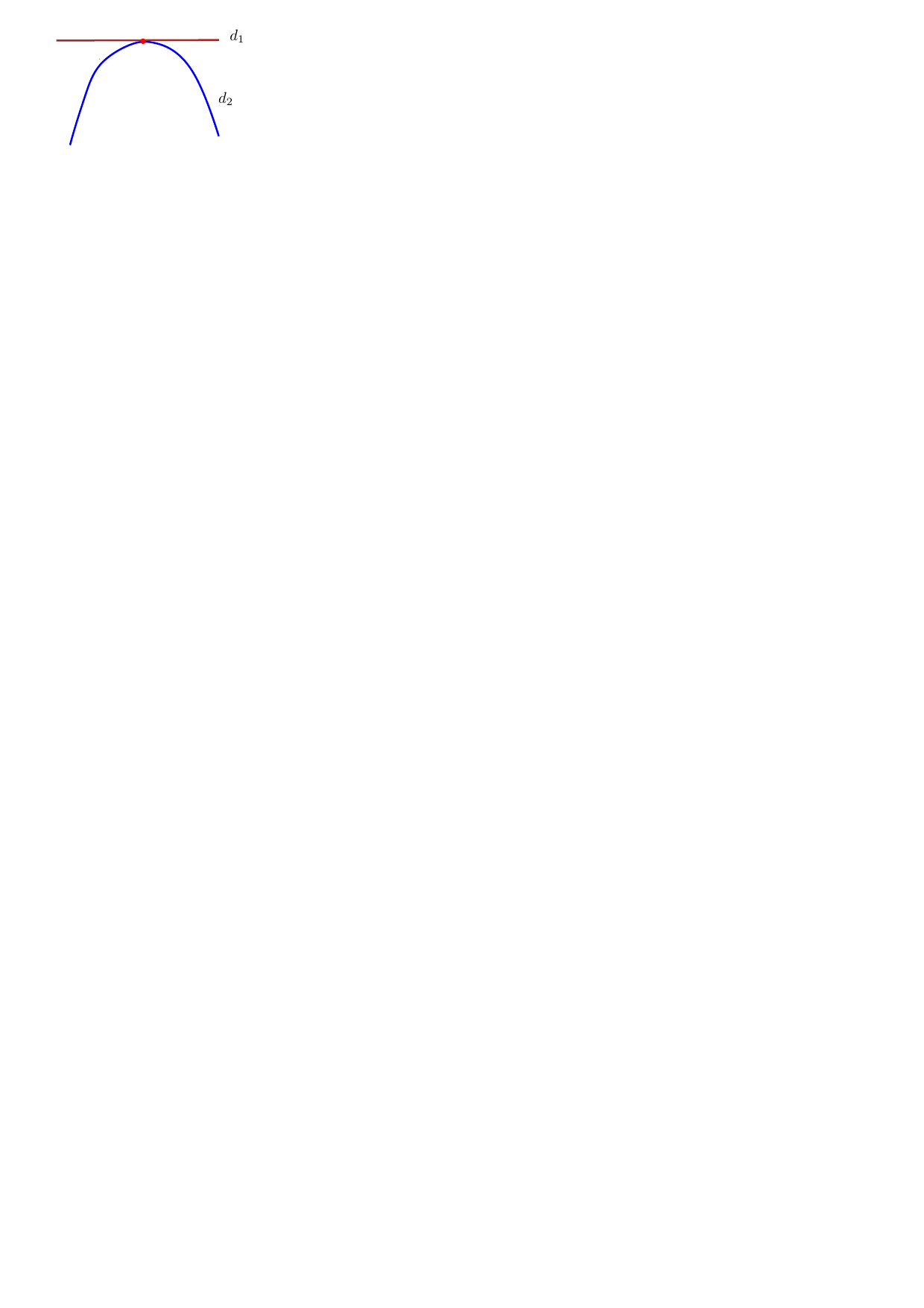}
\end{center}
\end{figure}
\FloatBarrier 
Recall that if $m_1+m_2+n+1 \,\neq\, 3(d_1+d_2)-2$, then the above number is formally declared to be zero.
We will give a procedure to compute the above number.

Consider the space 
\begin{align*}
\overline{{M}}_{0,1}(\mathbb{P}^2, d_1) \times \overline{{M}}_{0,1}(\mathbb{P}^2, d_2)\ \longrightarrow\
\mathbb{P}^2_1 \times \mathbb{P}^2_2.
\end{align*}
Let $\mathcal{T}_{d_1, d_2}$ denote the subspace of 
${M}_{0,1}(\mathbb{P}^2, d_1) \times {M}_{0,1}(\mathbb{P}^2, d_2)$, where the degree $d_1$ 
curve and the degree $d_2$ curve evaluate to the same point. Furthermore, they meet each other tangentially at that 
point.
 
We claim that the corresponding cycle determined by the closure is given by
\begin{align}
[\overline{\mathcal{T}}_{d_1, d_2}]\ =\ (\textnormal{ev}_{1}\times \textnormal{ev}_2)^{*}[\Delta_{12}]\cdot 
\Big( c_1(\mathbb{L}_1^{*})+c_1(\mathbb{L}_2^{*}) + c_1(\textnormal{ev}^*(T\mathbb{P}^2_2)) \Big), 
\label{T_cycle}
\end{align}
where $\Delta_{12}$ is the diagonal of $\mathbb{P}^2_1 \times \mathbb{P}^2_2$. 
We will justify the above claim shortly. 
Now note that
\begin{align}
\mathsf{T}^{d_1, d_2}_{m_1, m_2}(n)\ =\ [\mathcal{T}_{d_1, d_2}]\cdot \mathcal{H}_1^{m_1} \cdot \mathcal{H}_2^{m_2} 
\textnormal{ev}_1^{*}(a_1^n). \label{td1d2}
\end{align}
The right hand side of equation \eqref{td1d2} 
can be computed using the intersection of Tautological classes as given in Section \ref{itc_wdvv}.
This gives us the value of $\mathsf{T}^{d_1, d_2}_{m_1, m_2}(n)$. 

Let us now justify equation \eqref{T_cycle}. We denote a point in 
$\overline{M}_{0,1}(\mathbb{P}^2, d_1)$ by $[u_1,y_1]$ 
and a point in 
$\overline{M}_{0,1}(\mathbb{P}^2, d_2)$ by $[u_2,\,y_2]$. Here $u_i$ is the stable 
map and $y_i$ is the marked point. Now note that the inverse image of the diagonal 
via the evaluation map is the set of maps 
$[u_1,y_1]$ and $[u_2,y_2]$ such that $u_1(y_1) = u_2(y_2)$. 
Let us denote the point $u_1(y_1)$ by $p$ (which is same as $u_2(y_2)$). 

Now impose the condition that the image of $u_1$ is tangent to the image of $u_2$ 
at the point $p$. Let us see how we can do that. 
Note that this will happen if the differential of $u_1$ 
(evaluated at $y_1$) vanishes in the normal direction to the $d_2$ curve. More precisely, 
the image of the differential of $u_1$ in the normal direction is a section of the following bundle: 
\[ \mathbb{L}_1^* \otimes \big(\textnormal{ev}^*T\mathbb{P}_2^2/\mathbb{L}_2\big). \]
The Euler class of the above bundle is precisely equal to 
\[\Big( c_1(\mathbb{L}_1^{*})+c_1(\mathbb{L}_2^{*}) + c_1(\textnormal{ev}^*(T\mathbb{P}^2_2)) \Big). \]
This justifies equation \eqref{T_cycle}.

\section{Rational plane quartic with $E_6$ singularity via WDVV}\label{derivation_E6_curves}

In this section, we will explain a new approach, namely the WDVV equation, to obtain the number of rational degree $4$ plane curves with $E_6$ singularity lying in the intersection of $n$ generic lines and passing through $8-n$ general points. Denote this number by $N_4(E_6, n)$. We will explain how we obtain this number $N_4(E_6, n)$ using WDVV. As a result, recursively, we get the following:
\begin{align*}
\begin{split}
N_4(E_6, 0) = &~147\\
N_4(E_6, 1) = &~33\\
N_4(E_6, 2) = &~3 \\
N_4(E_6, n\geq 3) = &~ 0.
\end{split}
\end{align*}
It will be explained how, as before, WDVV equation is used to obtain the above numbers. 
As before, we will do intersection theory on $\overline{M}_{0,4}(\mathbb{P}^2, d)$. 
Let 
$\mathcal{W}_{0,4}(\mathbb{P}^2, d)$ denote the following subset of 
$M_{0,4}(\mathbb{P}^2, d)$: it is 
the space of rational curves (with smooth domains) such that it has a $E_6$ singularity at the first marked point. An element of this space can be visualized by the following picture:
 \begin{figure}[hbt!]
\label{pic idea7}
\begin{center}\includegraphics[scale = 0.8]{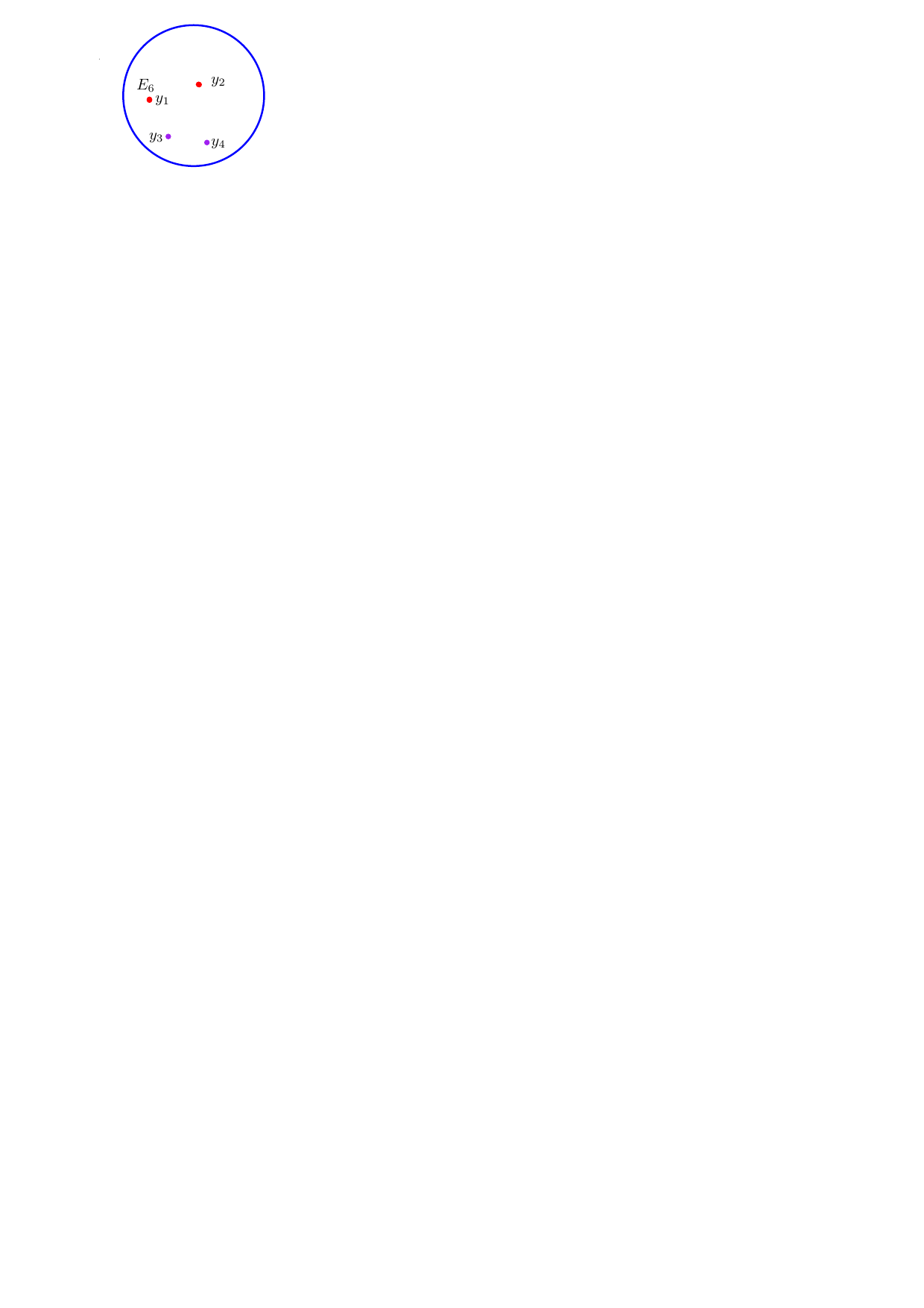}
\end{center}
\end{figure}
\FloatBarrier 
We denote $\overline{\mathcal{W}}_{0,4}(\mathbb{P}^2, d)$ to be its closure, inside $\overline{M}_{0,4}(\mathbb{P}^2, d)$. 
Now consider the forgetful map 
\begin{align*}
\pi: \overline{M}_{0,4}(\mathbb{P}^2, d)\longrightarrow \overline{M}_{0,4}. 
\end{align*}
Define the cycle 
\begin{align*}
\mathcal{\widetilde{Z}}\ :=\ \textnormal{ev}_1^*(a^n) \textnormal{ev}_2^*(a^2)\textnormal{ev}_3^*(a)\textnormal{ev}_4^*(a) \mathcal{H}^{8-n}. 
\end{align*}
This cycle, intersected with 
\[[\pi^{*}(12|34)]\cdot [\overline{\mathcal{W}}_{0,4}(\mathbb{P}^2, d)]\]
in $\overline{M}_{0,4}(\mathbb{P}^2, d)$ will 
give us a number and so does for the intersection $\pi^{*}(13|24)]\cdot [\overline{\mathcal{W}}_{0,4}(\mathbb{P}^2, d)]$ as well. We will explicitly workout these intersections only for $d=4$. While computing the first intersection, we see the $[\overline{\mathcal{W}}_{0,4}(\mathbb{P}^2, d)]\cdot [\pi^*(12|34)]$ will comprise of 
components of degree $d_1, d_2$ (where $d_1+d_2 = 4$) and the mark points $y_1$ and $y_2$ lie on the $d_1$ component, while the 
mark points $y_3$ and $y_4$ lie on the $d_2$ component.

Consider the component of 
$[\overline{\mathcal{W}}_{0,4}(\mathbb{P}^2, d)]\cdot [\pi^*(12|34)]$ that corresponds to the 
breakup $d_1 = 0$ and $d_2 =4$. In this situation, the markings $y_1$ and $y_2$ come together in 
$\mathcal{W}_{0,4}(\mathbb{P}^2, d)$. The resulting object is a degree $4$ curve with $E_6$ singularity, where the singularity lie on the intersection of $n+2$ lines where the constant component is mapped and it is attached with the marked
points $y_3$ and $y_4$ on $d_2$ component. Thus the intersection of 
this component with $\mathcal{\widetilde{Z}}$ will give us 
\begin{align*}
4^2~N_4(E_6, n+2). 
\end{align*}

Next, consider the component of 
$[\overline{\mathcal{W}}_{0,4}(\mathbb{P}^2, d)]\cdot [\pi^*(12|34)]$ that corresponds to the 
breakup $d_1 = 4$ and $d_2 =0$. This happens when $y_3$ and $y_4$ come together in 
$\mathcal{W}_{0,4}(\mathbb{P}^2, d)$. The resulting object is a degree $4$ curve with $E_6$ singularity
with the marked points $y_1$ and $y_2$ on them and a constant component bubble attached 
with the marked points $y_3$ and $y_4$ on that bubble. Thus the intersection of 
this component with $\mathcal{\widetilde{Z}}$ will give us 
\begin{align*}
N_4(E_6, n). 
\end{align*} 

Next, consider the component of $[\overline{\mathcal{W}}_{0,4}(\mathbb{P}^2, d)]\cdot [\pi^*(12|34)]$ that corresponds to the 
breakup $d_1\,>\,0$ and $d_2\,>\,0$. There will be two types of non-trivial situations here, however, when $d_i \,>\, 0$
for all $i \,=\, 1,\, 2$ and the $E_6$ singularity is present in any of the $d_i$ component, then there will be no contribution from
these configurations. This is because $N_d(E_6, n) \,=\, 0$ for all $d \,=\, 1,\,2,\,3$. Let us
describe an essential fact similar to the gluing statement described in the computation of rational curves with cusp.

Define $C_{m_1}^{d_1} \mathsf{T}_{m_2}^{d_2}(n)$ to be the number of rational curves with two components of
degree $d_1$ and $d_2$ such that the $d_1$ component has a cusp and it passes through $m_1$ general
points. Furthermore, the cusp lies in the $d_2$ component, passing through $m_2$ points. The $d_2$ component is tangent to a branch of the cusp at the wedge point. The total configuration 
passes through the intersection of $n$ generic lines satisfying the condition $m_1+m_2+n+1 = 3(d_1+d_2)-3$. Then, the geometric fact that we observed is the following: in the closure of rational curves with a $E_6$ singularity, we have
rational curves having two components with a cusp at one of the components and the cusp lies in the other component such that the other component is tangent to a branch of the cusp. This degeneration can be observed by the following picture:
\begin{figure}[hbt!]
\label{pic idea8}
\begin{center}\includegraphics[scale = 0.6]{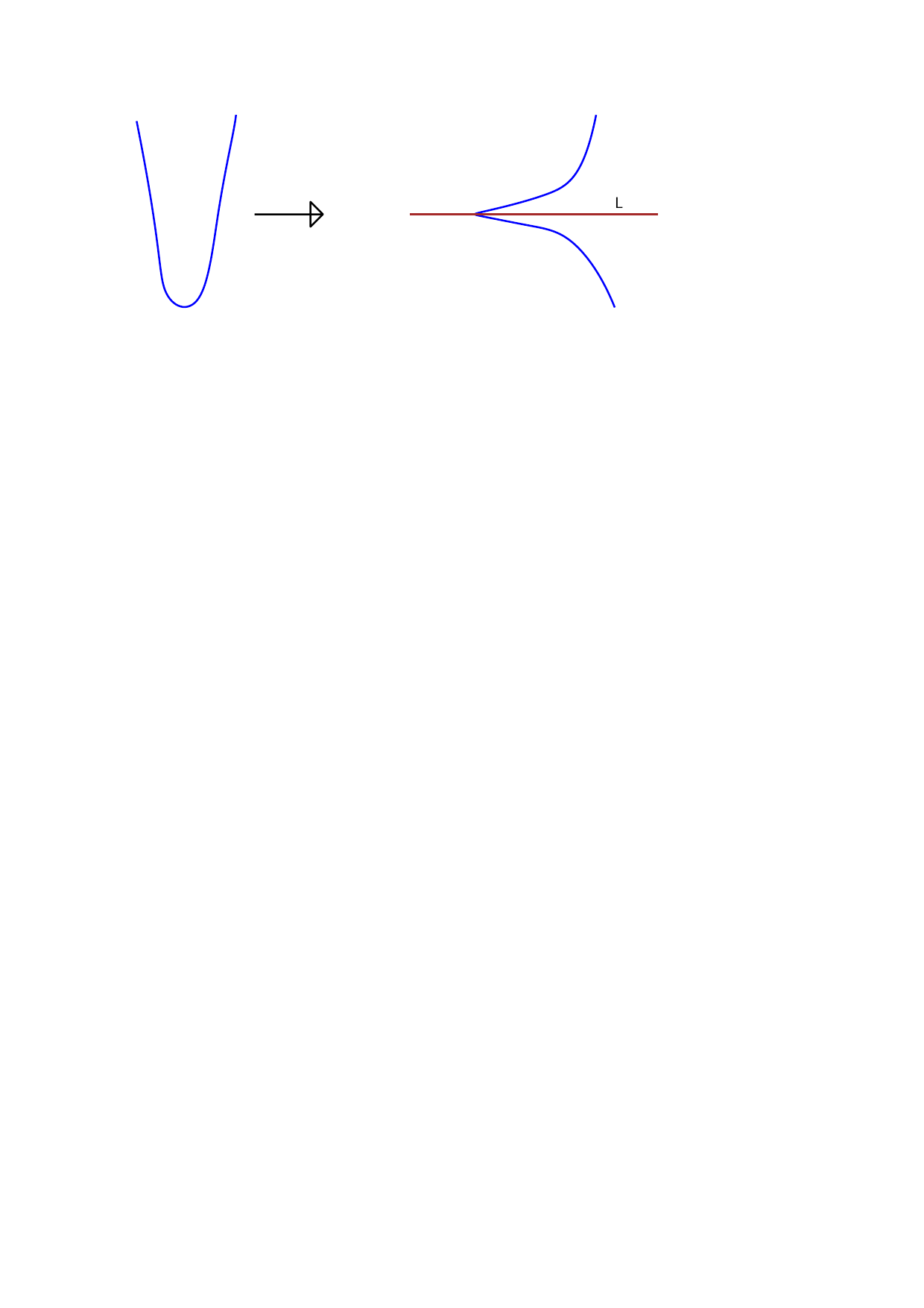}
\end{center}
\end{figure}
\FloatBarrier
The locus of these curves contributes with some multiplicities to the enumeration of rational curves with $E_6$ singularity using the WDVV equation. When the degree of the rational curve is $4$, one can see that there will be only two non-trivial cases for the breakup $(d_1, d_2) = (1, 3)$ and $(d_1, d_2) = (3, 1)$. There will be no contribution from the breaking $(d_1, d_2) = (2, 2)$ since no cuspidal curves of degree $1$ and $2$ exist. Note that the objects in the closure of $\mathcal{W}_{0,4}(\mathbb{P}^2, d)$ can be represented by the following picture:
\begin{figure}[hbt!]
\label{pic idea9}
\begin{center}\includegraphics[scale = 0.7]{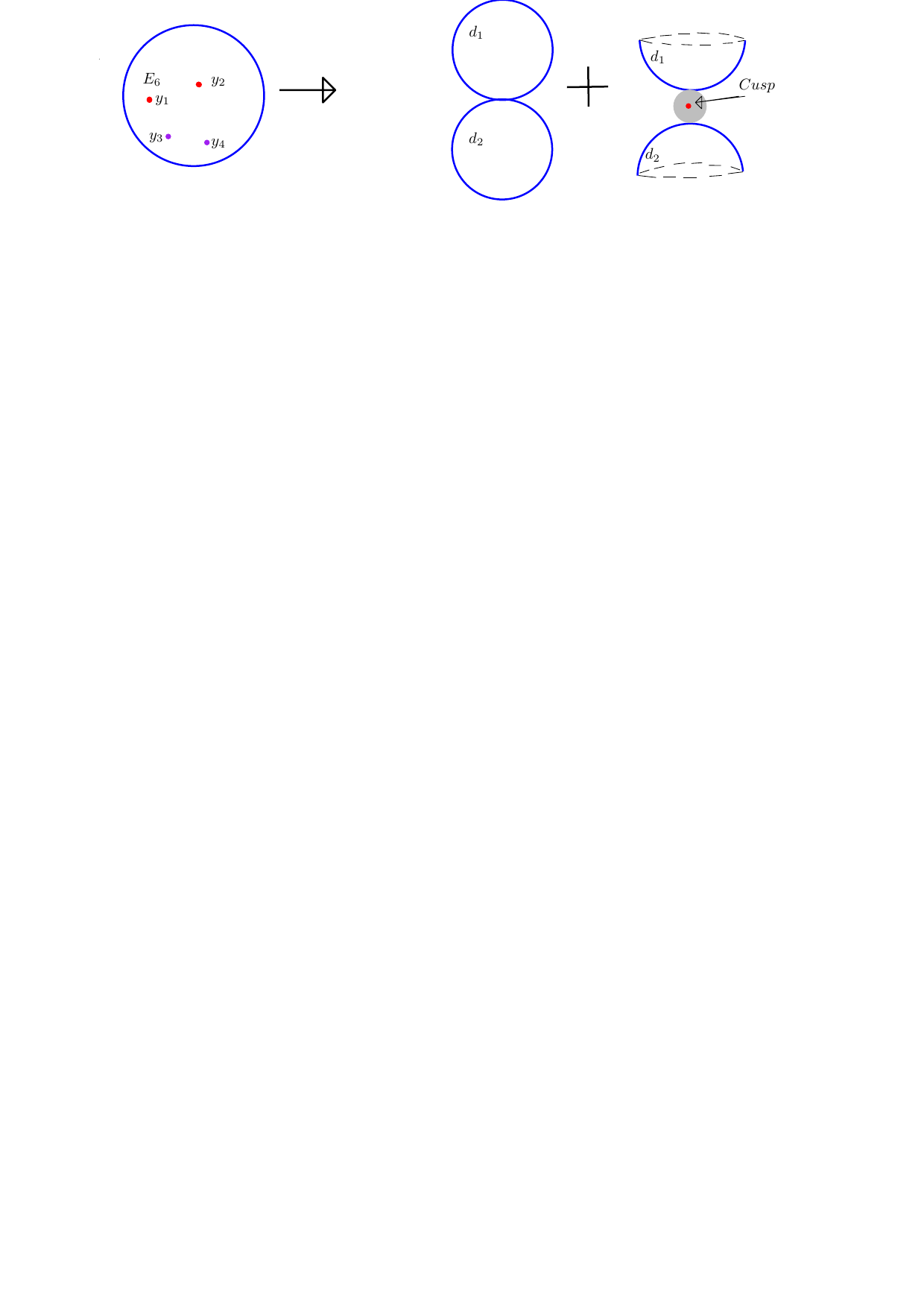}
\end{center}
\end{figure}
\FloatBarrier
The above discussion reflects that for the quartics, there will be no contribution from the configurations of the first term on the right-hand side of the above picture, and the second configuration will contribute to the WDVV equation with certain multiplicities.

The first type of 
non-trivial components will certainly be a degree $d_1 = 1$ curve and a degree $d_2 = 3$ curve such that the rational cubic has a cusp at the nodal point and the point $y_2$ is on the degree $d_1$ component together with the markings $y_3$, $y_4$ are on the degree $d_2$ component, where a branch of the cusp is tangent to the line.

Intersecting this with $\mathcal{\widetilde{Z}}$ gives us 
\begin{align}
\label{E6 wdvv part A1}
\begin{split}
 \binom{6-n}{5-n} \times {C}^{3}_{5-n} \mathsf{T}_{2}^{1}(n) \times 3^2 + \binom{6-n}{6-n} \times {C}^{3}_{6-n} \mathsf{T}_{1}^{1}(n) \times 3^2 
 \end{split}
\end{align}
To see this, choose $5-n$ points out of the available $6-n$ points and 
place the cubic curve (degree $d_2$) through it. This curve will actually be passing through $5-n+1$ points, because 
we are intersecting with $\textnormal{ev}_2^*(a^2)$. We place the line (degree 
$d_1$ curve) through the remaining point and the nodal point. There is a factor of $d_2^2$ 
due to the intersection with $\textnormal{ev}_3^*(a)\textnormal{ev}_4^*(a)$. The crucial fact that we will be using without proof is that this contributes to the WDVV procedure with multiplicity one.

Next, we discuss 
the second type of non-trivial situation.
This comprises a degree $d_1 = 3$ curve having a cusp, and a degree $d_2 = 1$ curve wedged at the ghost bubble. 
The point $y_2$ lies on the degree $d_1$ component. The points $y_3$, $y_4$ lie on the degree $d_2$ 
component and the point $y_1$ lies on the ghost bubble. Furthermore, the line is tangent to a branch of the cusp at the image of $y_1$.

Intersecting this with $\mathcal{\widetilde{Z}}$ gives us 
\begin{align}
\label{E6 wdvv part A2}
 \binom{6-n}{2} \times {C}^{3}_{5-n} \mathsf{T}_{2}^{1}(n) \times 1^2 + \binom{6-n}{1} \times {C}^{3}_{6-n} \mathsf{T}_{1}^{1}(n) \times 1^2 + \binom{6-n}{0} \times {C}^{3}_{7-n} \mathsf{T}_{0}^{1}(n) \times 1^2. 
\end{align} 
One of the most non-trivial fact that we will be using without proof is that this contributes to the WDVV procedure with multiplicity two.\\

Similar to the above, we will now focus on the intersection $[\pi^{*}(13|24)]\cdot [\overline{\mathcal{W}}_{0,4}(\mathbb{P}^2, d)]$ inside $\overline{M}_{0,4}(\mathbb{P}^2, d)$.

Consider the component of 
$[\overline{\mathcal{W}}_{0,4}(\mathbb{P}^2, d)]\cdot [\pi^*(13|24)]$ that corresponds to the
breakup $d_1 \,=\, 0$ and $d_2 \,=\,4$. In this situation, the markings $y_1$ and $y_3$ come together in 
$\mathcal{W}_{0,4}(\mathbb{P}^2, d)$. The resulting object is a degree $4$ curve with $E_6$ singularity, where the singularity lie on the line and the marked points $y_3$ and $y_4$ on $d_2$ component. Thus the intersection of 
this component with $\mathcal{\widetilde{Z}}$ will give us 
\begin{align*}
4~N_4(E_6, n+1). 
\end{align*}

Note the component of 
$[\overline{\mathcal{W}}_{0,4}(\mathbb{P}^2, d)]\cdot [\pi^*(13|24)]$ that corresponds to the 
breakup $d_1 = 4$ and $d_2 = 0$ do not contribute. Also, as pointed out before, there will be no contribution from any
rational curve having two components of degree $d_1$ and $d_2$ with $d_i>0$ for all $i = 1, 2$ and the $E_6$ singularity is present at any of the degree $d_i$ component. Thus, we left with the non-trivial cases as we encountered before.

Similar to the arguments above, we have the contribution from the components where a degree $d_1 = 1$ curve and a degree $d_2 = 3$ curve wedged at the ghost bubble, with the degree $d_2$ 
curve having a cusp at the image of $y_1$, where $y_1$ is in the ghost bubble, and the point $y_2$ is on the degree $d_1$ component together with the markings $y_3$, $y_4$ are on the degree $d_2$ component. Furthermore, the line is tangent to a branch of the cusp.

Intersecting this with $\mathcal{\widetilde{Z}}$ gives us 
\begin{align}
\label{E6 wdvv part B1}
 \binom{6-n}{4-n} \times {C}^{3}_{5-n} \mathsf{T}_{2}^{1}(n) \times 3 \times 1 + \binom{6-n}{5-n} \times {C}^{3}_{6-n} \mathsf{T}_{1}^{1}(n) \times 3 \times 1 + \binom{6-n}{6-n} \times {C}^{3}_{7-n} \mathsf{T}_{0}^{1}(n) \times 3 \times 1.
\end{align}
This contributes with multiplicity one in the WDVV equation.

Next, the component comprises of a degree $d_1= 3$ curve with a cusp, a ghost bubble, and a degree $d_2 = 1$ curve (line) attached to the ghost bubble. 
The point $y_2$ lies on the degree $d_1$ component. The points $y_3$, $y_4$ lie on the degree $d_2$ 
component and the point $y_1$ lies on the ghost bubble. Furthermore, the image of the degree $d_2$ curve is tangent to a branch of the cusp at the image of $y_1$, when intersected with $\mathcal{\widetilde{Z}}$, gives
\begin{align}
\label{E6 wdvv part B2}
 \binom{6-n}{1} \times {C}^{3}_{5-n} \mathsf{T}_{2}^{1}(n)~ \times 1 \times 3 
 + \binom{6-n}{0} \times {C}^{3}_{6-n} \mathsf{T}_{1}^{1}(n)~ \times 1 \times 3 .
\end{align} 
This contributes with multiplicity two in the WDVV equation.\\

Gathering all the above, we get the following contributions:
\begin{align}
\label{LHS of E6 via WDVV}
\begin{split}
[\overline{\mathcal{W}}_{0,4}(\mathbb{P}^2, d)]\cdot [\pi^*(12|34)] \cdot \mathcal{\widetilde{Z}} = 4^2 N_4(E_6, n+2) + N_4(E_6, n) + 9 \binom{6-n}{5-n}~ {C}^{3}_{5-n} \mathsf{T}_{2}^{1}(n) + \\ 9 \binom{6-n}{6-n}~ {C}^{3}_{6-n} \mathsf{T}_{1}^{1}(n) + 
 2 \Big\{ \binom{6-n}{2}~ {C}^{3}_{5-n} \mathsf{T}_{2}^{1}(n) + \binom{6-n}{1}~ {C}^{3}_{6-n} \mathsf{T}_{1}^{1}(n) + \binom{6-n}{0}~ {C}^{3}_{7-n} \mathsf{T}_{0}^{1}(n) \Big\}.
\end{split}
\end{align}
and 
\begin{align}
\label{RHS of E6 via WDVV}
\begin{split}
[\overline{\mathcal{W}}_{0,4}(\mathbb{P}^2, d)]\cdot [\pi^*(13|24)] \cdot \mathcal{\widetilde{Z}}
\, =\, 4N_4(E_6, n+1) + 3 \binom{6-n}{4-n}~ {C}^{3}_{5-n} \mathsf{T}_{2}^{1}(n) + 3 \binom{6-n}{5-n}~ {C}^{3}_{6-n} \mathsf{T}_{1}^{1}(n) + \\ 3 \binom{6-n}{6-n}~ {C}^{3}_{7-n} \mathsf{T}_{0}^{1}(n) + 2 \Big\{ 3 \binom{6-n}{1}~ {C}^{3}_{5-n} \mathsf{T}_{2}^{1}(n) + 3 \binom{6-n}{0}~ {C}^{3}_{6-n} \mathsf{T}_{1}^{1}(n) \Big\}.
\end{split}
\end{align}

Therefore, the WDVV equation reduces the equality of the equations \eqref{LHS of E6 via WDVV} and \eqref{RHS of E6 via WDVV}. Observe that by considering $n = 2$, one can independently apply the above WDVV procedure to directly obtain $N_4(E_6,~2) = 3$. Next, by applying WDVV for the case $n = 1$ and by using the value for $N_4(E_6,~2)$ we will get the number $N_4(E_6,~1)$. Finally, using the values for $N_4(E_6,~2)$ and $N_4(E_6,~1)$ and by applying WDVV for $n= 0$, we obtain the following equality of numbers
\begin{align*}
16 N_4(E_6, 2) + N_4(E_6, 0) + 432 + 2 \times 207 = 4 N_4(E_6, 1) + 621 + 2 \times 144,
\end{align*}
{where the values for} $C_{m_1}^{d_1} \mathsf{T}_{m_2}^{d_2}(n)$ {are taken from the table} appeared in subsection \ref{Table E6}.
From the above, we get the value $N_4(E_6,~0) = N_4(E_6) = 147$.

\subsubsection{Rational cuspidal curves with branched tangency}
\label{Table E6} 

It remains now to compute the numbers 
$C_{m_1}^{d_1} \mathsf{T}_{m_2}^{d_2}(n)$ for $d_2=1$ and $d_1=3$. 
Let us explain how to compute it. 

Note that this number is same as the characteristic 
number of two component curves, 
where the first component is a line, the second component is a cuspidal 
cubic and the line is tangent to the branch of the cuspidal point of the cubic. 
Denote by $\mathcal{D}_1$ and $\mathcal{D}_3$ the space of 
lines and cubics, respectively, in $\mathbb{P}^2$. Note that these are projective spaces of 
dimensions $2$ and $9$ respectively. Define 
\[ \mathsf{M}:= \mathcal{D}_1 \times \mathcal{D}_3 \times \mathbb{P}^2. \] 
The respective hyperplane classes will be denoted by $y_1, y_3$ and $a$. 
In order to compute the desired number, we will do intersection theory on the space 
$\mathsf{M}$. First of all, denote by $\mathsf{A}_2^{\mathsf{F}}$ the 
subspace of elements $(H_1,\, H_3\, p)$ in $\mathsf{M}$ such that the cubic $H_3$ 
has a cusp point at the marked point $p$.
The homology class represented by the closure of this space is given by 
\begin{align*}
[\mathsf{A}_2^{\mathsf{F}}]&\ =\ 24y_3^2 a^2 + 12y_3^3 a + 2y_3^4.
\end{align*}
To see this, we first note that $\mathsf{A}_2^{\mathsf{F}}$ 
is a codimension $4$ cycle; hence it will be given by a degree $4$ polynomial 
in $y_1, y_3$ and $a$. There is no dependence on $y_1$, since it is the pullback of a 
class from $\mathcal{D}_3 \times \mathbb{P}^2$. Next, note that the coefficient of 
$y_3^2 a^2$ is the number of cuspidal cubics in $\mathbb{P}^2$ passing through $7$ 
generic points. Similarly, the coefficient of $y_3^3 a$ is the
number of cuspidal cubics in $\mathbb{P}^2$ passing through $6$ 
generic points, with the cusp lying on a line. Finally, the coefficient of 
$y_3^4$ is the 
number of cuspidal cubics in $\mathbb{P}^2$ passing through $5$ 
generic points, with the cusp lying on a point. These numbers 
are $24$, $12$ and $2$ respectively 
(which have been computed in \cite{R.M}). 

Next, denote by $\mathsf{A}_2^{\mathsf{L}}$ the subspace of
elements $(H_1,\, H_3,\, p)$ in $\mathsf{M}$, such that the cubic has a 
cusp at the point $p$ and the line passes through the point $p$. The 
corresponding homology class is given by 
\begin{align*} 
[\mathsf{A}_2^{\mathsf{L}}]&\ =\ (y_1 +a)\cdot [\mathsf{A}_2^{\mathsf{F}}].
\end{align*}
This is because intersecting with the class $y_1+a$ corresponds to the 
condition of the line passing through 
the point $p$. 

Finally, denote by $\mathsf{P}\mathsf{A}_2$ the subspace of curves, where 
the line is tangent to the branch of the cusp. To see how to determine the homology 
class, first of all, define the incidence variety $\mathrm{I}$ as 
the subspace of
elements $(H_1,\, H_3,\, p)$ in $\mathsf{M}$ such that the point $p$ lies on the line and 
the curve. Assume that $H_1$ is given as the zero set of the linear polynomial $f_1$ 
and $H_3$ is given as the zero set of the cubic polynomial $f_3$. We can think of 
$\mathrm{I}$ as
\begin{align*}
\mathrm{I}&:= \{ ([f_1], [f_3], p)\in \mathsf{M}: f_1(p)=0, ~~f_3(p)=0\}. 
\end{align*}
On top of the incidence variety $\mathrm{I}$, we have a short exact sequence of vector bundles 
\[ 0 \longrightarrow \mathbb{L}:= 
\mathsf{Ker} (\nabla f_1|_p) \longrightarrow T\mathbb{P}^2|_p \longrightarrow 
(\gamma_{\mathcal{D}_1}^* \otimes \gamma_{\mathbb{P}^2}^*)|_p \longrightarrow 0. \]
Define $\lambda:= c_1(\mathbb{L}^*)$. From the above short exact sequence, 
we note that $\lambda\,=\, y_1 - 2a$. 
Now note that we can geometrically think of $\mathsf{PA}_2$ as the zero set of the section 
of a line bundle that is induced by taking the second derivative of $f_3$ along the 
direction of $\mathbb{L}$. This is a section of the bundle 
\[ \gamma_{\mathcal{D}_3}^* \otimes \mathbb{L}^{* 2} \otimes \gamma_{\mathbb{P}^2}^{*3}.\] 
The Euler class of the above line bundle is $(y_3 + 2 \lambda + 3 a)$. Hence, 
\begin{align*}
[\mathsf{P}\mathsf{A}_2]& = [\mathsf{A}_2^{\mathsf{L}}]\cdot (y_3 + 2\lambda + 3 a).
\end{align*}
Our desired number $C_{m_1}^{d_1} \mathsf{T}_{m_2}^{d_2}(n)$ 
is now given by 
\[C_{m_1}^{d_1} \mathsf{T}_{m_2}^{d_2}(n)= [\mathsf{P}\mathsf{A}_2]\cdot y_1^{m_1} y_3^{m_2} a^n. \]
For the convenience of the reader, we will tabulate these numbers as follows:\\
\begin{center}
\begin{tabular}{||c | c | c||} 
 \hline
 When $n = 0$ & When $n = 1$ & When $n = 2$ \\ [1.5ex] 
 \hline
 $(m_1,~m_2) $~\hspace*{1.7cm}~ $C_{m_1}^{3} \mathsf{T}_{m_2}^{1}(0)$ & $(m_1,~m_2) $~\hspace*{1.7cm}~ $C_{m_1}^{3} \mathsf{T}_{m_2}^{1}(1)$ & $(m_1,~m_2) $~\hspace*{1.7cm}~ $C_{m_1}^{3} \mathsf{T}_{m_2}^{1}(2)$ \\[1.2ex]
 \hline\hline
 $(5,~2) $~\hspace*{2cm}~ $5$ & $(4,~2) $~\hspace*{2cm}~ $1$ & $(3,~2) $~\hspace*{2cm}~ $0$ \\ 
 \hline
$(6,~1) $~\hspace*{1.92cm}~ $18$ & $(5,~1) $~\hspace*{2cm}~ $7$ & $(4,~1) $~\hspace*{2cm}~ $1$ \\
 \hline
 $(7,~0) $~\hspace*{1.92cm}~ $24$ & $(6,~0) $~\hspace*{1.92cm}~ $12$ & $(5,~0) $~\hspace*{2cm}~ $2$ \\  
 \hline
\end{tabular}
\end{center}
\vspace*{.3cm}

Note that $C_{m_1}^{d_1} \mathsf{T}_{m_2}^{d_2}(n) = 0$ for all $n \geq 3$. When the degree of the $d_2$ component is higher than $1$, at present, we do not have a formula to compute $C_{m_1}^{d_1} \mathsf{T}_{m_2}^{d_2 \geq 2}(n)$. 

\section{Low degree checks} 
\label{ldc}
All the values that we have computed using our formulas, agree with the values computed earlier by 
Ran (\cite{Ran3}), 
Pandharipande (\cite{Rahul1}), Zinger (\cite{g2p2and3}) and Ernstr\"{o}m and Kennedy (\cite{ken}). 
For the convenience of the reader, we tabulated various low-degree numbers for a few cases. Interested readers are invited to use our Mathematica program (available on request) for several other numbers appearing in various cases.

\subsection{Verification with tangencies}

Using the results in Section \ref{T_num}, we can explicitly compute all possible numbers for $\mathsf{T}^{d_1, d_2}_{m_1, m_2}(n)$. Observe that various possible cases of enumerating rational curves with first order tangencies are incorporated within the symbol $\mathsf{T}^{d_1, d_2}_{m_1, m_2}(n)$. For instance, if we fix the degree $d_1$ component, that is, if we look at the number of two component rational curves of degree $d_1$ and $d_2$ such that the $d_1$ component passes through $3d_1 -1$ 
points, the $d_2$ component passes through $m_2$ points, and the $d_2$ component is tangent to the $d_1$ component at the nodal, where the nodal point 
passes through the intersection of $n$ generic lines, it is straightforward to see that $\mathsf{T}^{d_1, d_2}_{3d_1-1, m_2}(n) = 0$ for $n \geq 2$. In this case, we will tabulate a few low degree numbers as follows:
\begin{center}
\vspace{.2cm}
\begin{tabular}{|c|c|c|c|c|c|c|c|c|} 
\hline
$(d_1, d_2)$ &$(1, 2)$ &$(1, 3)$ & $(1, 4)$ & $(1, 5)$ & $(2, 2)$ & $(2, 3)$ & $(2, 4)$ & $(2, 5)$ \\
\hline 
$(m_1, m_2)$ &$(2, 4) $ &$(2, 7)$ & $(2, 10)$ & $(2, 13)$ & $(5, 4)$ & $(5, 7)$ & $(5, 10)$ & $(5, 13)$ \\
\hline 
$\mathsf{T}^{d_1, d_2}_{m_1, m_2}(0)$ 
& $2$ & $36$ & $2184$ & $335792$ & $6$ & $96$ & $5608$ & $ 846192 $ \\ 
\hline
\end{tabular}
\vspace{.2cm}
\end{center}

Similarly, when $n = 1$, we have
\begin{center}
\vspace{.2cm}
\begin{tabular}{|c|c|c|c|c|c|c|c|c|} 
\hline
$(d_1, d_2)$ &$(1, 2)$ &$(1, 3)$ & $(1, 4)$ & $(1, 5)$ & $(2, 2)$ & $(2, 3)$ & $(2, 4)$ & $(2, 5)$ \\
\hline 
$(m_1, m_2)$ &$(2, 3) $ &$(2, 6)$ & $(2, 9)$ & $(2, 12)$ & $(5, 3)$ & $(5, 6)$ & $(5, 9)$ & $(5, 12)$\\
\hline 
$\mathsf{T}^{d_1, d_2}_{m_1, m_2}(1)$ 
& $1$ & $10$ & $428$ & $51040$ & $2$ & $20$ & $856$ & $ 102080$ \\ 
\hline
\end{tabular}
\vspace{.2cm}
\end{center}
We see that the values obtained by the formula of $\mathsf{T}^{d_1, d_2}_{m_1, m_2}(n)$ is an agreement with those computed by Gathmann's computer program GROWI (implementing the formulas in \cite{Gath1}).

\subsection{Verification with the result by Ernstr\"{o}m and Kennedy}

We now verify our numbers with those computed by Ernstr\"{o}m and Kennedy in \cite[Page 34]{ken}. Notice that the numbers $\mathsf{C}^{d}_{3d-2-n}(n) = 0$ whenever $n\geq 3$. When $n = 0$, a few low degree numbers $\mathsf{C}^{d}_{3d-2}(0)$ are as follows:
\begin{center}
\vspace{.2cm}
\begin{tabular}{|c|c|c|c|c|c|c|c|} 
\hline
$d$ &$3$ &$4$ & $5$ & $6$ & $7$ & $8$ & $9$\\
\hline 
$\mathsf{C}^{d}_{3d-2}(0)$ 
& $24$ & $2304$ & $435168$ & $156153600$ & $97424784000$ &$97958336523264$ & $149437059373232640$\\ 
\hline
\end{tabular}
\vspace{.2cm}
\end{center}
These are in agreement with tables shown in \cite[ArXiv version, Page 27-28]{ken}. Next, when $n = 1$, the numbers $\mathsf{C}^{d}_{3d-3}(1)$ are tabulated
\begin{center}
\vspace{.2cm}
\begin{tabular}{|c|c|c|c|c|c|c|c|} 
\hline
$d$ &$3$ &$4$ & $5$ & $6$ & $7$ & $8$ & $9$\\
\hline 
$\mathsf{C}^{d}_{3d-3}(1)$ 
& $12$ & $864$ & $130896$ & $39223584$ & $21009319488$ &$18506708865792$ & $25119941440608000$\\
\hline
\end{tabular}
\vspace{.2cm}
\end{center}
Similarly, when $n = 2$, the numbers $\mathsf{C}^{d}_{3d-4}(2)$ are as follows:
\begin{center}
\vspace{.2cm}
\begin{tabular}{|c|c|c|c|c|c|c|c|} 
\hline
$d$ &$3$ &$4$ & $5$ & $6$ & $7$ & $8$ & $9$\\
\hline 
$\mathsf{C}^{d}_{3d-4}(2)$ 
& $2$ & $102$ & $12024$ & $2953656$ & $1341437280$ &$1026019929312$ & $1230836838698880$\\
\hline
\end{tabular}
\vspace{.2cm}
\end{center}
These are all in agreement with the tables \cite[ArXiv version, Page 25-26]{ken}, as expected.

\section{Future directions} 

Some generalizations we hope to think in the future (by extending the idea of this manuscript) 
are as follows: 
\begin{enumerate}
 \item\label{i1} Extending this to del Pezzo surfaces and get the numbers obtained in \cite{BVSRM}. 
 \item\label{i2} Extending this to planar curves in $\mathbb{P}^3$ and recover the numbers obtained in \cite{Rahul_Rit}. 
 \item\label{i3} Enumerating rational cuspidal curves in $\mathbb{P}^n$ (not necessarily planar) and recover the numbers computed in 
 \cite{g0pr}.
\item \label{i4} Enumerating rational curves in $\mathbb{P}^2$ with a singularity which in parametric form is given by 
 $t \longrightarrow (t^2, t^5)$. This if done would be a completely new result.
\item \label{i5} Enumerating curves rational curves in $\mathbb{P}^2$ with two cusps. 
\end{enumerate}
Problems \ref{i1} and \ref{i2} should be doable without too much further effort; the main obstacle there 
is likely to be of a computational nature. We are not sure as yet how tractable problem \ref{i3}. 
We need to see if the one can prove a similar gluing Theorem for $\mathbb{P}^n$ when $n \geq 3$. The 
{Problem \ref{i4}} would both require much more geometrical input. We would need to 
prove more refined gluing theorems, to identify the cycle when we pull back the WDVV equation from 
$\overline{M}_{0,4}$. We intend to investigate this in the future. Problem \ref{i5} is also 
going to require 
{an} additional geometric input. We will need to see what happens when two cusps 
come together. 

\section{Acknowledgement} 
We thank the referees for their helpful comments.

\bibliography{Degreed} 
\bibliographystyle{plain}

\end{document}